\documentclass[11pt,a4paper]{amsart}

\usepackage[left=1in,right=1in,top=1in,bottom=1in,foot=0.5in]{geometry}

\usepackage{amsmath,amsfonts,amssymb}
\usepackage[T1]{fontenc}
\usepackage{color}
\usepackage{epsfig}
\usepackage{graphicx}
\usepackage{cite,url}
\usepackage{hyperref}
\usepackage{xspace}
\usepackage{enumerate}

\newtheorem{mytheorem}{Theorem}[section]
\newtheorem{mylemma}[mytheorem]{Lemma}
\newtheorem{myproposition}[mytheorem]{Proposition}
\newtheorem{mycorollary}[mytheorem]{Corollary}

\newtheorem{myconjecture}[mytheorem]{Conjecture}

\theoremstyle{definition}

\newtheorem{myremark}[mytheorem]{Remark}

\newcommand{\commentout}[1]{}

\begin{document}

\title{Towards a more structured search for Erd\H{o}s-Gy\'arf\'as counter-examples}
\author[G.\ Ducoffe]{Guillaume Ducoffe} \address{National Institute
 for Research and Development in Informatics and University of
  Bucharest, Bucureşti, Rom\^{a}nia} \email{guillaume.ducoffe@ici.ro}
\author[B.\ Dumitru]{Bogdan Dumitru} \address{University of
  Bucharest, Bucureşti, Rom\^{a}nia} \email{bogdan.dumitru@fmi.unibuc.ro}
\date{}

\maketitle

\begin{abstract}
    The Erd\H{o}s-Gy\'arf\'as conjecture posits that every graph with minimum degree at least three contains a cycle of length some power of two.
    We prove a few simple structural properties for any minimal counter-example to this conjecture.
    In particular, the fraction of its vertices of degree three must be greater than $2/3$, thus improving on the prior bound of $4/7$ (Carr, 2026). 
    Furthermore, it is either biconnected or the $1$-clique-sum of two biconnected graphs.
    By exploiting some of these properties, we were able to verify the conjecture for every graph of order at most $40$, every bipartite graph of order at most $66$, and every cubic graph of order at most $48$.
\end{abstract}

\section{Introduction}

Graphs in what follows are undirected, simple (\textit{i.e.}, loopless and without multiple edges), and unweighted.
We refer to~\cite{BoMu08} for a standard textbook on Graph Theory.
Erd\H{o}s and Gy\'arf\'as conjectured that every graph with minimum degree at least three contains a cycle of length some power of two~\cite{Erd97}.
It is remarkable that such a stringent hypothesis has remained open after 30 years.
Note that there is an abundance of $C_4$'s in random graphs $G(n,p)$, with $p$ a constant. Therefore, the conjecture is true for almost all graphs. 
It is also true for some classes of graphs with strong metric properties.
For instance, consider a graph $G$ such that $G \setminus v$ is distance-preserving for some $v$.
{Assume that $G$ has minimum degree at least three.
If $v$ has two nonadjacent neighbors $x$ and $y$, then the distance preservation in $G \setminus v$ gives another common neighbor $u \ne v$, and $v\,x\,u\,y\,v$ is a $C_4$.
Otherwise, $N_G(v)$ is a clique of size at least three, and together with $v$ it contains a $K_4$, hence a $C_4$.}
Consequently, the conjecture is satisfied by every graph with a distance-preserving elimination ordering, including: chordal graphs~\cite{Ros70}, distance-hereditary graphs~\cite{BaMu86}, cop-win graphs, and weakly modular graphs -- a broad class of graphs studied in Metric Graph Theory, see~\cite{CCHO20}.
Recently, Carr proved the conjecture for the graphs with diameter two~\cite{Car25}.
The conjecture also holds for $3$-connected cubic planar graphs~\cite{HeKr13}.
To the best of our knowledge, the latter result has been the only one to date to use connectivity properties in the study of the Erd\H{o}s-Gy\'arf\'as conjecture.
We refer to~\cite{DaSh01,Sha98} for other partial results.

In~\cite{Royle}, Royle verified the conjecture for every graph of order at most $15$.
In~\cite{Mark04}, Markstr\"om further verified the conjecture for every cubic graph of order at most $29$.
Recently, these bounds have been improved in several concurrent works, with the best known results at the time of writing these lines being: $31$ for general graphs~\cite{Bal26} {(hence also for cubic graphs)}, and $58$ for bipartite cubic graphs~\cite{Tra26}. The related problem of computing the smallest order $f(k)$ of a cubic graph with no cycle of length $4,8,16,\ldots,2^k$ has been considered in~\cite{Gar26}. 
Hereafter, for any class of graphs $\mathcal{F}$, a \emph{counter-example} within $\mathcal{F}$ refers to any graph $G \in \mathcal{F}$ such that $G$ has minimum degree at least three but no cycle of length some power of two.
It is called a \emph{minimal} counter-example within $\mathcal{F}$ if it minimizes $(|V(G)|,|E(G)|)$ with respect to the lexicographic order.
When $\mathcal{F}$ is the class of all graphs, then, we simply refer to a minimal counter-example.
Improving the aforementioned bounds is likely to require additional insights on the structure of a minimal counter-example.
In this respect, it was proved in~\cite{Car26} that any minimal counter-example of order $n$ must contain at least $\frac{4n}7$ vertices of degree three.
Tranquilli related the search for bipartite cubic counter-examples to combinatorial design theory~\cite{Tra26}.

\medskip
\noindent
\textbf{Our contributions.}
We prove several new properties for any minimal counter-example to the Erd\H{o}s-Gy\'arf\'as conjecture.
In particular, we prove that a minimal counter-example is always $2$-edge connected, and it is almost biconnected.
These connectivity results are first reported in Sec.~\ref{sec:connectivity}.
Furthermore, any minimal counter-example of order $n$ has at most $2n-2$ edges, and it must contain at least $\frac{2n}3+1$ vertices of degree three.
The latter improves on~\cite{Car26}.
These results are reported in Sec.~\ref{sec:eg-density}. 
Then, based on some of the results above, we propose two reformulations of the conjecture in Sec.~\ref{sec:conjecture}.
Finally, we verify the conjecture for every graph of order at most $40$, every bipartite graph of order at most $66$, and every cubic graph of order at most $48$ (Sec.~\ref{sec:verif}). 

\smallskip
We conclude this paper in Sec.~\ref{sec:perspectives} with some perspectives for future works.

\medskip
\noindent
\textbf{Notations.}
Let $G=(V,E)$ be a graph.
Throughout the paper, we denote by $n_G := |V|$ its order, and by $m_G := |E|$ its size.
Two vertices $u$ and $v$ are called adjacent if and only $uv \in E$.
The neighborhood of a vertex $v$ is made of all the vertices $u$ adjacent to $v$, and it is denoted by $N_G(v)$.
Let also $d_G(v) := |N_G(v)|$ be denoting the degree of $v$.
For every $X \subseteq V$, the induced subgraph $G[X]$ has for vertex set $X$ and for edge set $E_G(X) := \{ uv \in E : u,v \in X \}$.
In the same way, for every $E' \subseteq E$, the subgraph induced by $E'$ has for edge set $E'$ and for vertex set the end-vertices of all the edges in $E'$.
We omit $G$ from the subscript of our notations if it is clear from the context.
Other notations and terminology are locally defined wherever they are needed in the paper.

\section{Connectivity}\label{sec:connectivity}

We present new results on the connectivity of any minimal counter-example.
Our proofs in what follows rely on edge- and vertex-deletions, edge contractions, and $1$-clique-sum -- \textit{a.k.a.}, the gluing of two graphs along a common vertex.
Therefore, our results in this section can be also applied to any minimal counter-example within a class of graphs $\mathcal{F}$ that is stable under these operations (\textit{e.g.}, planar graphs). 

\begin{mylemma}\label{lm:cycle-edge}
  In a minimal counter-example $G=(V,E)$, every edge is contained in a cycle of length $2^k+1$, for some integer $k \ge 1$.  
\end{mylemma}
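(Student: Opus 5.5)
Let $e=uv$ be an arbitrary edge of $G$. The plan is to contract $e$ and use minimality. Let $G/e$ be the simple graph obtained by identifying $u$ and $v$ into a new vertex $w$ and deleting loops and parallel edges. It has $n_G-1$ vertices, so it is lexicographically smaller than $G$ and cannot be a counter-example. So either $G/e$ has minimum degree less than three, or it has a cycle of length $2^j$ for some $j\ge 2$. In each case I will extract a cycle of length $2^k+1$ through $e$ in $G$.

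\emph{Case 1: $G/e$ has a cycle $C'$ of length $2^j$.} If $C'$ avoids $w$, it is a cycle of $G$, which is impossible. So $C'$ passes through $w$, entering along an edge $aw$ and leaving along an edge $wb$, with $a\ne b$. Each of $a$ and $b$ is adjacent in $G$ to $u$ or to $v$.
\begin{itemize}
\item If $a$ and $b$ can be chosen as neighbours of the same endpoint, say $u$, then replacing $w$ by $u$ gives a cycle of length $2^j$ in $G$, which is impossible.
\item Otherwise, say $a\in N(u)\setminus N(v)$ and $b\in N(v)\setminus N(u)$. Replacing $w$ by the path $u\,v$ gives a cycle of $G$ of length $2^j+1$ that contains $e$, which is what we want with $k=j\ge 2$.
\end{itemize}

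\emph{Case 2: $G/e$ has a vertex of degree at most two.} Contraction changes degrees only at common neighbours of $u$ and $v$ (each loses one) and at $w$.
\begin{itemize}
\item If a common neighbour $x$ has degree at most two in $G/e$, then $x$ has a common neighbour with $e$, so $u\,v\,x\,u$ is a triangle through $e$. This is a cycle of length $2^1+1$, giving $k=1$.
\item Otherwise $d_{G/e}(w)=|N(u)\cup N(v)|-2\le 2$. Since $d(u),d(v)\ge 3$, $u$ has at least two neighbours besides $v$, and so does $v$. If $u$ and $v$ have a common neighbour, we again get a triangle through $e$. If they have none, then $|N(u)\cup N(v)|-2\ge 4$, a contradiction.
\end{itemize}
In both sub-cases $e$ lies on a cycle of length $3=2^1+1$.

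The step needing most care is the bookkeeping of the contraction in Case 1. We must check that $a\neq b$, and that if one of $a,b$ is a common neighbour of $u$ and $v$ we can choose the same endpoint for both, which again yields a forbidden cycle. Similarly, Case 2 needs an exact count of how degrees drop under simplification of $G/e$. These checks are routine, so I expect the argument to go through as outlined.
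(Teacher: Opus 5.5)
Your proof is correct and follows the paper's route: contract $e$, invoke minimality, and lift the resulting $2^j$-cycle through the contracted vertex to a $(2^j+1)$-cycle through $e$. The paper rules out the triangle case before contracting, so that $G/e$ automatically has minimum degree at least three, whereas you contract first and recover the triangle from the degree drop; your explicit check that the two cycle-neighbours of $w$ cannot attach to the same endpoint of $e$ (otherwise $G$ would already contain a $2^j$-cycle) fills in a step the paper leaves implicit.
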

\begin{proof}
    Let $e \in E$ be arbitrary. 
    If $e$ belongs to some triangle, then the result follows for $k=1$.
    Otherwise, let $G' = G / e$ be the graph that results from the contraction of the edge $e$ (removing loops and multiple edges).
    Since $e$ does not belong to a triangle, its two end-vertices do not have common neighbors in $G$.
    Therefore, the minimum degree of $G'$ is at least three.
    By minimality of $G$, $G'$ contains a cycle of length $2^k$, for some integer $k \ge 2$. 
    As this cycle does not exist in $G$, it must contain the vertex $v_e$ resulting from the contraction of the edge $e$. 
    Therefore, there is a cycle of length $2^k+1$ in $G$ that contains the edge $e$.
\end{proof}

\begin{mycorollary}\label{cor:2-edge-connected}
    Any minimal counter-example is $2$-edge connected.
\end{mycorollary}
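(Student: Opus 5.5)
The plan is to derive the corollary directly from Lemma~\ref{lm:cycle-edge}, using the characterization that a connected graph is $2$-edge connected if and only if it has no bridge, that is, if and only if every edge lies on some cycle.

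First I show that a minimal counter-example $G$ is connected. Suppose $G$ is disconnected and let $C$ be one of its connected components. Then $G[C]$ has minimum degree at least three, since every neighbor of a vertex of $C$ lies in $C$. It also has no cycle of length a power of two, since any such cycle would be a cycle of $G$. So $G[C]$ is a counter-example with strictly fewer vertices than $G$, contradicting minimality with respect to $(|V(G)|,|E(G)|)$. Note also that $G$ has at least four vertices, so the statement is not vacuous.

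Next, let $e=uv$ be an arbitrary edge of $G$. By Lemma~\ref{lm:cycle-edge}, $e$ lies on a cycle of length $2^k+1\ge 3$. Deleting $e$ therefore leaves a $u$--$v$ path along the rest of this cycle, so $G\setminus e$ is still connected and $e$ is not a bridge. Since $e$ was arbitrary and $G$ is connected, $G$ is $2$-edge connected.

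There is no real obstacle here. The only point needing care is the connectivity step, and it is handled by the lexicographic minimality: restricting to one component cannot create a power-of-two cycle or lower any degree.
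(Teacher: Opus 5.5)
Your proof is correct and is essentially the paper's argument: Lemma~\ref{lm:cycle-edge} puts every edge on a cycle of length $2^k+1\ge 3$, so no edge is a bridge. The paper leaves the connectivity of $G$ implicit, and your minimality argument (a single component would already be a smaller counter-example) correctly fills that in.
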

\begin{proof}
    Since by Lemma~\ref{lm:cycle-edge} every edge is in a cycle, there can be no bridge.    
\end{proof}

The following result is our main result in this section:

\begin{mytheorem}\label{thm:2-vertex-connected}
    If $G$ is a minimal counter-example, and it is not biconnected, then the following must hold:
    \begin{enumerate}
        \item There is a unique cut-vertex $c$;
        \item There are exactly two connected components $A,B$ of $G - c$;
        \item Vertex $c$ has exactly two neighbors $a_1,a_2$ in $A$, each of degree three;
        \item Vertex $c$ has exactly two neighbours $b_1,b_2$ in $B$, each of degree three;
        \item $|A| = |B|$;
        \item $|E(A)| = |E(B)|$;
        \item Both the blocks $G[A \cup \{c\}],G[B \cup \{c\}]$ are $2$-degenerate.
    \end{enumerate}
\end{mytheorem}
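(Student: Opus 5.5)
The plan is to study the block structure of $G$ and to use two kinds of smaller graphs that inherit the absence of power-of-two cycles: subgraphs of $G$, and gluings of copies of a block. The basic fact used throughout is that every cycle of a graph lies inside a single block.

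\textbf{Leaf blocks.} Suppose $G$ is a minimal counter-example that is not biconnected. By Corollary~\ref{cor:2-edge-connected} there are no bridges, so every block is $2$-connected and has at least three vertices. Let $L$ be a leaf block with cut-vertex $c$. Every vertex of $L$ other than $c$ keeps all its neighbours inside $L$, so it has degree at least three in $L$. If $d_L(c) \ge 3$, then $L$ is a counter-example, since its cycles are cycles of $G$, and $L$ has fewer vertices than $G$. This contradicts minimality. Since $L$ is $2$-connected, it follows that $d_L(c)=2$ for every leaf block $L$ and its cut-vertex $c$.

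\textbf{Doubling a leaf block.} Among all leaf blocks, choose $B$ minimizing $(|V(B)|,|E(B)|)$ lexicographically, with cut-vertex $c$. Let $H$ be the $1$-clique-sum of two copies of $B$ glued at $c$. Then $c$ has degree $4$ in $H$, every other vertex has degree at least three, and every cycle of $H$ lies in a copy of $B$. So $H$ is a counter-example with
\[ n_H = 2|V(B)|-1 \quad\text{and}\quad m_H = 2|E(B)|. \]
Since $G$ is not biconnected, it has a second leaf block $B'$, which shares at most one vertex with $B$. Hence
\[ n_G \ge |V(B)|+|V(B')|-1 \ge n_H. \]
By minimality of $G$ we need $(n_G,m_G)\le(n_H,m_H)$, so equality must hold throughout. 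This forces three things: $G$ has exactly the two blocks $B$ and $B'$, they share the vertex $c$, and $|V(B')|=|V(B)|$. Then
\[ m_G = |E(B)|+|E(B')| \ge 2|E(B)| = m_H, \]
so also $|E(B')|=|E(B)|$. This gives items 1, 2, 5 and 6, with $A=V(B)\setminus\{c\}$ and $B$ relabelled accordingly. Since $d_L(c)=2$ in each leaf block, $c$ has exactly two neighbours on each side, which is the counting part of items 3 and 4. I expect this doubling step to be the main obstacle. The key point is to pick the minimal leaf block so that the lexicographic comparison closes; the other steps are local.

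\textbf{Degrees of the neighbours of $c$ and degeneracy.} Any edge $uv$ with $d(u),d(v)\ge 4$ can be deleted. The result keeps minimum degree at least three, has one fewer edge, and creates no new cycles, which contradicts minimality. Now $c$ has degree $4$. If $a_1$ had degree at least $4$, deleting $ca_1$ would give such a contradiction, so $d(a_1)=3$; the same argument gives $d(a_2)=d(b_1)=d(b_2)=3$. This completes items 3 and 4.

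For item 7, suppose the block $G[A\cup\{c\}]$ contains a subgraph $S$ of minimum degree at least three. Then $S$ has no power-of-two cycle, and
\[ n_S \le |A|+1 < n_G. \]
This contradicts minimality. Hence every subgraph of the block has a vertex of degree at most two, i.e.\ the block is $2$-degenerate. The same argument applies to $G[B\cup\{c\}]$.
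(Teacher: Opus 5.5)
Your proof is correct, but it is organized differently from the paper's.

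The paper fixes an arbitrary cut vertex $c$ and works with the components $C_1,\dots,C_q$ of $G-c$. Deleting any one component must drop $d(c)$ below three, which gives $q\le 3$. The case $q=3$ is excluded because every edge at $c$ would then be a bridge. Next, $c$ has exactly two neighbours in each component, since otherwise $G[C_1\cup\{c\}]$ is a smaller counter-example. The paper then uses the same gluing gadget as you (replace $B$ by a disjoint copy of $A$), but only to obtain $|A|=|B|$ and $|E(A)|=|E(B)|$. Uniqueness of the cut vertex is a separate last step: the already-established items are applied to a hypothetical second cut vertex $c'\in A$, whose two components would satisfy $|A'|\le|A|-1<|B|<|B'|$.

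You instead go through the block--cut tree. The local fact $d_L(c)=2$ for every leaf block, together with one gluing comparison against a lexicographically minimal leaf block, gives everything through the equality case: a unique cut vertex, exactly two components, and both equalities. So you need neither the $q\le 3$ counting nor the separate uniqueness argument. The price is some reliance on block-structure facts and the careful lexicographic choice of $B$. The paper's version is more hands-on, and it establishes items 2--4 at every cut vertex before uniqueness is known.

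Three small steps deserve a sentence in a write-up:
\begin{enumerate}
\item ``Exactly two blocks'' follows because any third block would lie in $V(B)\cup V(B')$ while meeting each of them in at most one vertex. It would then have at most two vertices, contradicting the absence of bridges.
\item Item 6 concerns $E_G(A)$, which equals $|E(B)|-2$ because $d_B(c)=2$, and likewise for $B'$. Your equality $|E(B)|=|E(B')|$ therefore transfers directly.
\item You implicitly use that a minimal counter-example is connected, which is immediate.
\end{enumerate}
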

\begin{proof}
    Assume the existence of a cut-vertex $c$ (if none exists, then $G$ is biconnected, and so we are done). Consider the connected components $C_1,C_2,\ldots,C_q$ of $G - c$. By minimality of $G$, the removal of any component $C_j$ must result in vertex $c$ having degree strictly less than $3$. Therefore, $q \le 3$. Furthermore, if $q = 3$, then every edge incident to $c$ is a bridge, thus contradicting Corollary~\ref{cor:2-edge-connected}. As a result, $q = 2$ holds. Furthermore, $c$ has at least two neighbors in each component. If $c$ has at least three neighbors in say $C_1$, then the counter-example $G[C_1 \cup \{c\}]$ would contradict the minimality of $G$. Hence, $c$ has exactly two neighbors in each component. In what follows, let $A = C_1, B = C_2$. Then, $N(c) = \{a_1,a_2,b_1,b_2\}$ for some $a_1,a_2 \in A$ and $b_1,b_2 \in B$. Since $c$ has degree four, the minimality of $G$ also implies that all its neighbors have degree three (otherwise, we could have removed any edge between $c$ and one of its neighbors of degree at least four). Suppose now by contradiction $|A| < |B|$. By replacing $B$ with a disjoint copy $A'$ of $A$, connecting $c$ to $a_1',a_2'$, we would get a smaller counter-example than $G$. As a result, we obtain $|A| = |B|$. By a similar reasoning, we also obtain $|E(A)| = |E(B)|$. Note that $G[A \cup \{c\}]$ does not contain any cycle whose length equals a power of two. Therefore, if $G[A \cup \{c\}]$ would contain a subgraph $H_A$ of minimum degree at least three, then, the latter would contradict the minimality of $G$. It implies that $G[A \cup \{c\}]$ and, by similar reasoning, $G[B \cup \{c\}]$, are $2$-degenerate. Finally, we argue that there is no other cut-vertex $c' \ne c$. Indeed, if such a $c'$ would exist, then, say it is in $A$. By the same reasoning as above, there are two equal-size connected components $A',B'$ in $G - c'$. However, say that $c \in B'$. Then, $B \subseteq B'$. It implies that $|A'| \le |A \setminus \{c'\}| = |A|-1 < |B| < |B'|$. A contradiction. 
\end{proof}

\begin{mycorollary}\label{cor:even-biconnected}
    If a minimal counter-example has an even number of vertices, then it is biconnected.   
\end{mycorollary}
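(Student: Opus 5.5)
We argue by contraposition, using Theorem~\ref{thm:2-vertex-connected}. Let $G$ be a minimal counter-example that is not biconnected; we show that $n_G$ is odd. It then follows that every minimal counter-example with an even number of vertices is biconnected.

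By Theorem~\ref{thm:2-vertex-connected}, $G$ has a unique cut-vertex $c$. Moreover, $G - c$ has exactly two connected components $A$ and $B$. The vertex set therefore partitions as $V = A \cup B \cup \{c\}$, and so $n_G = |A| + |B| + 1$.

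Item~5 of Theorem~\ref{thm:2-vertex-connected} gives $|A| = |B|$. Hence $n_G = 2|A| + 1$, which is odd. This contradicts the assumption that $n_G$ is even, and the corollary follows.

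No step here is a real obstacle, since all the structural work is already done in Theorem~\ref{thm:2-vertex-connected}. The only points to check are that $A$ and $B$ are disjoint and together with $c$ cover all of $V$, which holds because they are the connected components of $G - c$. One should also note that a minimal counter-example is connected. This follows from minimality: otherwise a single component would itself be a smaller counter-example, since each component has minimum degree at least three and no cycle of length a power of two. So "not biconnected" indeed means that a cut-vertex exists, as Theorem~\ref{thm:2-vertex-connected} assumes.
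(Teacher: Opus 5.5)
Your proof is correct and is essentially the paper's argument: both apply Theorem~\ref{thm:2-vertex-connected} to obtain $n = |A|+|B|+1 = 2|A|+1$, which is odd. Your extra remark that a minimal counter-example is connected, so that ``not biconnected'' really yields a cut-vertex, makes explicit a point the paper leaves implicit.
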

\begin{proof}
    If $G$ is a minimal counter-example and it is not biconnected, then, by Theorem~\ref{thm:2-vertex-connected}, it has a unique cut-vertex $c$ such that $G - c$ is made of two equal-size connected components $A$ and $B$. In particular, the order of $G$ is $n = |A|+|B|+1 = 2|A|+1$, which is odd. 
\end{proof}

Note that bipartite graphs and cubic graphs are not stable under edge contractions.
Therefore, our results above cannot be applied to minimal bipartite counter-examples, nor to minimal cubic counter-examples.
However, a slightly weaker result can be derived for cubic graphs, and also for \emph{bridge-addable} classes of graphs that are both hereditary {and closed under disjoint unions}.
Recall that a class of graphs is called bridge-addable if it is stable under adding an edge between two distinct connected components, see~\cite{AMR12}. In particular, bipartite graphs are {hereditary, bridge-addable, and closed under disjoint unions}.

\begin{mytheorem}\label{thm:cubic-connectivity}
    Let $G$ be any minimal cubic counter-example, or any minimal counter-example within a class of graphs $\mathcal{F}$ that is {hereditary, bridge-addable, and closed under disjoint unions}.
    Either $G$ is $2$-edge connected, or the following must hold:
    \begin{enumerate}
        \item There is a unique bridge $e$;
        \item The two connected components $A,B$ of $G \setminus e$ satisfy $|A| = |B|$, and $|E(A)| = |E(B)|$;
        \item Both subgraphs $G[A]$ and $G[B]$ are $2$-degenerate.
    \end{enumerate}
\end{mytheorem}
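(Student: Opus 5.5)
Assume $G$ is not $2$-edge connected, and fix a bridge $e=ab$. Let $A \ni a$ and $B \ni b$ be the two components of $G \setminus e$; since we may take $G$ connected, these are the only ones. Note that $G$ is connected in both settings: a disconnected counter-example has a component that is itself a counter-example (with minimum degree at least three), and that component is cubic in the cubic case and lies in $\mathcal{F}$ by heredity. So it is strictly smaller.

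The key construction is a \emph{doubling} of one side. Suppose $|A|<|B|$, or $|A|=|B|$ and $|E(A)|<|E(B)|$. Let $A'$ be a disjoint copy of $G[A]$ and form $H$ from $G[A] \cup A'$ by adding the edge $aa'$.

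\begin{itemize}
\item Every vertex of $H$ has the same degree as its counterpart in $G$; in particular $a$ and $a'$ recover the degree lost from $e$. So $H$ has minimum degree at least three, and $H$ is cubic whenever $G$ is.
\item In the class case, $G[A] \in \mathcal{F}$ by heredity, the disjoint union of two copies lies in $\mathcal{F}$ by closure under disjoint unions, and adding the bridge $aa'$ stays in $\mathcal{F}$ by bridge-addability.
\item A cycle never uses a bridge, so every cycle of $H$ lies in $G[A]$ or in its copy, and hence corresponds to a cycle of $G$. Thus $H$ has no power-of-two cycle.
\item We have $n_H = 2|A| \le n_G$ and $m_H = 2|E(A)|+1 \le |E(A)|+|E(B)|+1 = m_G$, with the inequality strict in at least one coordinate in the right lexicographic sense.
\end{itemize}

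So $H$ is a smaller counter-example, contradicting minimality. By symmetry we get $|A|=|B|$ and $|E(A)|=|E(B)|$, which is item 2.

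For item 3, suppose $G[A]$ is not $2$-degenerate. Then it contains a subgraph $H_A$ of minimum degree at least three. We may take $H_A$ induced, because adding edges inside the vertex set keeps the minimum degree at least three. Then $H_A$ has no power-of-two cycle, and $|V(H_A)| \le |A| < n_G$. In the class case $H_A \in \mathcal{F}$ by heredity, so it is a smaller counter-example, a contradiction. In the cubic case, $H_A$ has all degrees at most three, hence is cubic. But an induced cubic subgraph of the connected cubic graph $G$ has no edges leaving it, so it equals $G$, which is impossible since $|V(H_A)| < n_G$. Hence $G[A]$, and likewise $G[B]$, is $2$-degenerate.

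For item 1, suppose there is a second bridge $f \neq e$, and say $f$ lies in $G[A]$. Then $G\setminus f$ has components $A_f$ and $B_f$, with $B \cup \{a\} \subseteq B_f$ for the side containing $e$. Hence $|B_f| \ge |B|+1 > |A| > |A_f|$, where the last inequality holds because $A_f \subsetneq A$. This contradicts item 2 applied to the bridge $f$.

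The main subtlety is the cubic case of item 3, since heredity is not available there. The observation that a cubic induced subgraph of a connected cubic graph is the whole graph resolves it.
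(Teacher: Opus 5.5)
Your proof is correct and takes the same approach the paper intends (its proof is omitted as an adaptation of Theorem~\ref{thm:2-vertex-connected}): you double the lexicographically smaller side across the bridge, extract a subgraph of minimum degree at least three to get $2$-degeneracy, and compare component sizes to show the bridge is unique. Two minor remarks: the inequality $m_H \le m_G$ can fail when $|A|<|B|$, but you do not need it since $n_H<n_G$ already settles the lexicographic comparison; and in the cubic case of item~3, $H_A$ is itself cubic and hence directly a smaller cubic counter-example, so your connectivity argument is valid but not needed.
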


The proof is very similar to that of Theorem~\ref{thm:2-vertex-connected}, and therefore it is omitted.

\section{The number of vertices of degree three}\label{sec:eg-density}

This section is devoted to bounds on the number of degree-three vertices in any minimal counter-example, and to various related results.
Some general bounds are first presented in Sec.~\ref{ssec:setup}.
Afterwards, our main techniques in this part, which we call the \emph{doubling reduction}, is introduced in Sec.~\ref{ssec:doubling-reduction}.
Roughly, the latter is a contraction scheme that halves cycle lengths (see Lemma~\ref{lem:D} in what follows).
We note that Tranquilli has implicitly used a different cycle-halving technique than ours, as he considers the two half-squares of any cubic bipartite counter-example~\cite{Tra26}.
By contrast, our technique applies to any minimal counter-example, which is not necessarily cubic nor bipartite.
Further refinements of this technique are discussed in Sec.~\ref{ssec:subdivisions}, where we show that it applies at every dyadic scale (Theorem~\ref{thm:Dplus}).

\medskip
The following folklore bound is used in what follows:
\begin{mylemma}\label{lem:2-degenerate-bound}
    If $G$ is a $2$-degenerate graph with at least $\kappa$ connected components and at most $\iota$ isolated vertices, then, $m \le 2n - 3\kappa + \iota$ holds.
\end{mylemma}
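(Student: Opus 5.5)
The plan is to reduce to the connected case and then count edges during a degeneracy elimination. First I treat a single connected component. A connected $2$-degenerate graph $H$ on $n_H \ge 2$ vertices has at most $2n_H - 3$ edges. To see this, fix an ordering $v_1, \dots, v_{n_H}$ in which every $v_i$ has at most two neighbors among $v_1, \dots, v_{i-1}$. Such an ordering is obtained by repeatedly deleting a vertex of degree at most two and reversing the deletion sequence. With this ordering, $v_1$ contributes no back-edges, $v_2$ contributes at most one (namely $v_2v_1$), and every later vertex contributes at most two. Summing gives $m_H \le 0 + 1 + 2(n_H - 2) = 2n_H - 3$. For an isolated vertex we have $m_H = 0 = 2\cdot 1 - 3 + 1$, so in that case the bound is $2n_H - 3$ plus one.

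Next I sum over all components. Let $G$ have $c \ge \kappa$ components, of which $j \le \iota$ are isolated vertices. Since $2$-degeneracy is inherited by subgraphs, each component is $2$-degenerate, so the per-component bounds apply. Adding them gives
\[
m \le \sum_{H} (2n_H - 3) + j = 2n - 3c + j \le 2n - 3\kappa + \iota .
\]
The last inequality uses $c \ge \kappa$ and $j \le \iota$.

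The only point needing care is the base of the count in the connected case: the second vertex of the ordering can contribute only one edge. The ordering need not keep prefixes connected, since only the first two vertices are special: $v_1$ has no earlier vertex and $v_2$ has exactly one. I expect no real obstacle beyond this, and the isolated-vertex correction, where the bound $2n_H-3$ fails by exactly one.
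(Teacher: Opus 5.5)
Your proof is correct and follows the paper's argument. The paper also bounds each component with at least two vertices by $2|C_j|-3$, by repeatedly removing vertices of degree at most two, which is your degeneracy ordering read backwards; it then sums over the $\kappa-\iota$ such components after invoking monotonicity in $\kappa$ and $\iota$. You instead carry the actual counts $c$ and $j$ and apply $c\ge\kappa$, $j\le\iota$ only at the end, which makes that monotonicity step a little more explicit.
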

\begin{proof}
    Since the upper bound decreases with $\kappa$ but increases with $\iota$, we may assume in what follows there are exactly $\kappa$ connected components and $\iota$ isolated vertices.
    Consider any connected component $C_j$ with at least two vertices.
    Since $G[C_j]$ is $2$-degenerate, we can iteratively remove vertices of degree at most two from this component until there has remained two vertices of $C_j$.
    At each iteration, at most two edges of $E(C_j)$ are removed. Furthermore, there is at most one edge between the two remaining vertices of $C_j$.
    Therefore, $|E(C_j)| \le 2(|C_j|-2)+1 = 2|C_j|-3$.
    By summing over all the $\kappa-\iota$ connected components with at least two vertices, we get that $m \le 2(n-\iota)-3(\kappa-\iota) = 2n - 3\kappa + \iota$.
\end{proof}

    \subsection{General bounds}\label{ssec:setup}
    Results in this part can be applied to any minimal counter-example within a monotone {class of graphs (\textit{i.e.}, closed under taking subgraphs)}.
    In particular, they hold for bipartite graphs.
    Indeed, we only use vertex- and edge-deletions in our proofs.

\medskip
    For any graph $G=(V,E)$, let us write
    \[
        L=\{v: d(v)=3\},\qquad H=\{v: d(v)\geq 4\},\qquad
        \ell=|L|,\quad h=|H|.
    \]
    We use the following standard Royle-like reductions.

    \begin{mylemma}\label{lem:red}
        The following hold for any minimal counter-example $G$:
        \begin{enumerate}
            \item no edge joins two vertices of $H$;
            \item every vertex of $L$ has a neighbor in $L$;
            \item every neighbor of a vertex of $H$ lies in $L$;
            \item every vertex of $H$ has at least four neighbors in $L$;
            \item $G$ contains no proper subgraph of minimum degree at least three.
        \end{enumerate}
    \end{mylemma}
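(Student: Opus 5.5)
The plan is to prove each item by a one-step deletion argument: if the item fails, delete an edge or a set of vertices, observe that the minimum degree stays at least three, and note that the smaller graph is a subgraph of $G$, so it still has no cycle of length a power of two. This contradicts the minimality of $(n_G,m_G)$. Only vertex- and edge-deletions are used, so the argument works within any monotone class, as announced. I would prove item (5) first, since it is the general principle, and then derive the others from it.

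For (5), let $G'$ be a proper subgraph of $G$ with minimum degree at least three. Any cycle of $G'$ is a cycle of $G$, so $G'$ has no cycle of length a power of two, and $G'$ is a counter-example. Being proper, $G'$ has either fewer vertices, or the same vertices and fewer edges. Either way it is lexicographically smaller than $G$, a contradiction.

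Item (1) follows directly. If $uv\in E$ with $u,v\in H$, then $G\setminus uv$ has $d(u),d(v)\ge 3$, contradicting (5). Item (3) is a restatement of (1): every vertex has degree at least three, so a neighbor of a vertex of $H$ lies in $L\cup H$. By (1) it is not in $H$, hence it lies in $L$.

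Item (4) follows from (3). A vertex $v\in H$ has degree at least four, and all its neighbors lie in $L$, so it has at least four neighbors in $L$.

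Item (2) is the only step needing a small structural argument. Suppose some $v\in L$ has all three neighbors in $H$. I would try deleting $v$. Each neighbor $w$ of $v$ has $d(w)\ge 4$, so after the deletion $d(w)\ge 3$. No other vertex is affected, so $G-v$ has minimum degree at least three, contradicting (5).

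I expect no serious obstacle. The only point requiring care is checking that each deletion keeps the minimum degree at least three, and that the resulting graph stays in the class $\mathcal{F}$, which holds because $\mathcal{F}$ is closed under subgraphs.
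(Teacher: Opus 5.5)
Your proof is correct and is essentially the paper's argument: edge deletion for (1), (3) and (4), vertex deletion for (2), and for (5) the observation that a proper subgraph of minimum degree at least three would be a smaller counter-example. The only difference is the order: you prove (5) first and derive the other items from it, whereas the paper argues each deletion directly and states (5) last.
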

   \begin{proof}
    First observe that no cycle can be created by deleting an edge.
    Therefore, by minimality of $G$, every edge must have an end-vertex in $L$.
    The latter implies (1) and (3). Since every vertex of $H$ has degree at least four by definition, it also implies (4).
    In the same way, observe that no cycle can be created either by deleting a vertex.
    The latter implies (2).
    Finally, as any proper subgraph of $G$ has no power-of-two cycle, its minimum degree must be at most two for otherwise the latter would contradict the minimality of $G$.
    The latter implies (5).
    \end{proof}

    We first use Lemma~\ref{lem:red} to upper bound the number of edges in any minimal counter-example.
    \begin{mytheorem}\label{thm:edge-bound}
        For any minimal counter-example $G=(V,E)$, $m \le 2n-2$ holds.
    \end{mytheorem}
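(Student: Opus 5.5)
The plan is to delete a single vertex of degree three and apply the folklore bound of Lemma~\ref{lem:2-degenerate-bound} to what remains. Lemma~\ref{lem:red}(5) makes every proper subgraph of a minimal counter-example $2$-degenerate, so this is the natural route.

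First, I check that $L \neq \emptyset$. If $L$ were empty, every vertex would lie in $H$. Since $G$ has minimum degree at least three, it has an edge, and that edge would join two vertices of $H$, contradicting Lemma~\ref{lem:red}(1). So I fix a vertex $v \in L$, which has $d(v)=3$. I then consider $G' = G - v$, which has $n-1$ vertices and $m-3$ edges.

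Next I establish three properties of $G'$.
\begin{enumerate}
\item $G'$ is $2$-degenerate. Every subgraph of $G'$ is a proper subgraph of $G$, so by Lemma~\ref{lem:red}(5) it has a vertex of degree at most two.
\item $G'$ has no isolated vertex. Every vertex of $G$ has degree at least three and loses at most one incident edge when $v$ is removed, so every vertex of $G'$ has degree at least two. Thus Lemma~\ref{lem:2-degenerate-bound} applies with $\iota = 0$.
\item $G'$ has at least one connected component. Since $G$ has minimum degree at least three, $n \ge 4$, so $G'$ is nonempty and we may take $\kappa = 1$.
\end{enumerate}

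Lemma~\ref{lem:2-degenerate-bound} then gives
\[
m - 3 \;\le\; 2(n-1) - 3\cdot 1 + 0 \;=\; 2n-5,
\]
that is, $m \le 2n-2$.

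I do not expect a real obstacle. The only points needing care are the choice of a vertex of degree exactly three, which is what makes the constant come out to $-2$, and the verification that $\iota = 0$, which is where the minimum-degree hypothesis on $G$ is used.
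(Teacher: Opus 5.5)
Your proposal is correct and follows essentially the same route as the paper: take a degree-three vertex $v$ (which exists by Lemma~\ref{lem:red}(1)), use Lemma~\ref{lem:red}(5) to see that $G-v$ is $2$-degenerate, and apply Lemma~\ref{lem:2-degenerate-bound} with $\kappa=1$ and $\iota=0$. Your explicit checks that $L\neq\emptyset$ and that $G-v$ has no isolated vertices simply fill in details that the paper leaves implicit.
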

    \begin{proof}
        By Lemma~\ref{lem:red}(1), $G$ has a degree-three vertex $v$.
        Furthermore, by Lemma~\ref{lem:red}(5), $G \setminus v$ is $2$-degenerate.
        Therefore, by Lemma~\ref{lem:2-degenerate-bound} applied for $G \setminus v$ with $\kappa = 1, \ \iota = 0$, we obtain $m - d(v) \le 2(n-1) - 3$.
        Since $d(v) = 3$, the latter simplifies to $m \le 2n-2$.
    \end{proof}

    Next, we give a first lower bound on $\ell = |L|$, which improves on the state of the art~\cite{Car26}.
    \begin{myproposition}\label{prop:density}
        For any minimal counter-example $G=(V,E)$, $\ell \ge 2n/3$ holds.
    \end{myproposition}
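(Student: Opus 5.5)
Combine the edge bound of Theorem~\ref{thm:edge-bound} with a degree count. Since every vertex has degree at least three and those in $H$ have degree at least four,
\[
2m = \sum_{v} d(v) \;\ge\; 3\ell + 4h = 3n + h .
\]
Theorem~\ref{thm:edge-bound} gives $2m \le 4n - 4$, hence $h \le n - 4$. This is far too weak, since we want $h \le n/3$. So the global edge bound alone does not suffice, and the counting must use the structure of $H$ from Lemma~\ref{lem:red}.

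The refined plan is to count the edges incident to $H$ and compare them with the room available at vertices of $L$. By Lemma~\ref{lem:red}(1) and (3), $H$ is an independent set and all its neighbours lie in $L$. The number of $H$--$L$ edges is therefore $\sum_{v\in H} d(v) \ge 4h$. By Lemma~\ref{lem:red}(2), each vertex of $L$ has at least one neighbour in $L$, so it sends at most two edges to $H$. Hence $4h \le 2\ell$, that is, $h \le \ell/2$, which gives $\ell \ge 2n/3$ because $n = \ell + h$. This is exactly the claimed bound.

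There is no serious obstacle: the main step is just to notice that (2) caps the $H$-degree of each $L$-vertex at two, while (4) (or simply the degree bound together with (3)) forces each $H$-vertex to contribute at least four $H$--$L$ edges. If one wants to sharpen to $2n/3+1$ later, one would examine the equality case (every $L$-vertex has exactly two $H$-neighbours and every $H$-vertex has degree four), which forces the $L$-vertices to form a perfect matching. That refinement uses the edge bound or the degeneracy argument, and it is not needed for the present proposition.
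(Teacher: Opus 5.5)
Your argument is correct and is exactly the paper's proof: you double-count the $H$--$L$ edges, bounding their number below by $4h$ via Lemma~\ref{lem:red}(1), (3), (4) and above by $2\ell$ via Lemma~\ref{lem:red}(2). The paper only differs in bookkeeping, writing the upper bound as $|L_1|+2|L_2|\le 2\ell$ using the partition of $L$ by number of $H$-neighbours. Your opening detour through Theorem~\ref{thm:edge-bound} and your closing remark are not needed. Note, though, that the paper's later sharpening to $2n/3+1$ requires slack $\ell\ge 2h+3$, which is more than ruling out the equality case. It comes from the bound $|L_2|\le 2h-3$ of Lemma~\ref{lem:D}, not from the edge bound.
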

    \begin{proof}
       By Lemma~\ref{lem:red}(2), each $v\in L$ has at most two neighbors in $H$.
        The latter suggests to partition $L$ as follows:
        \[
            L_i=\{v\in L:\ v\ \text{has exactly}\ i\ \text{neighbors in}\ H\},\qquad
            i=0,1,2 .
        \]
        Let us write $e(L,H)$ for the number of edges with one end in $L$ and one end in $H$.
        Note that by  Lemma~\ref{lem:red}(3), the number of edges incident to a vertex of $H$ is exactly $e(L,H)$. 
        Therefore, by using a double counting argument we obtain
        \begin{equation}\label{eq:count}
            4h\ \leq\ e(L,H)\ =\ |L_1|+2|L_2| .
        \end{equation}
        Together with $|L_1|+2|L_2|\le 2\ell$ we get $\ell\geq 2h$, and so,\ $\ell\geq 2n/3$.     
    \end{proof}

    \subsection{The doubling reduction}\label{ssec:doubling-reduction}

    The purpose of this part is to introduce the doubling technique, which we use in order to sharpen our lower bound on the number of degree-three vertices in any minimal counter-example.
    Since we use vertex- and edge-deletions, but also edge contractions, the results in what follows can be applied to any minimal counter-example within a minor-closed class of graphs.
    Note that in what follows we are re-using the partition $L_0,L_1,L_2$ of $L$ from Prop.~\ref{prop:density}.
    \begin{mylemma}\label{lem:D}
    For any minimal counter-example $G=(V,E)$, let us define a multigraph $M$ with vertex set $H$ such that, for each vertices $x$ and $y$ of $H$, the number of edges $xy$ in $M$ is equal to the number of vertices $v\in L_2$ such that $N_G(v) \cap H = \{x,y\}$.
    Then:
        \begin{enumerate}
            \item $M$ is simple;
            \item $M$ contains no cycle of length $2^k$, for any $k\geq 1$;
            \item $M$ is $2$-degenerate.
        \end{enumerate}
        In particular, $|L_2|=|E(M)|\leq 2h-3$ when $h\geq 2$, and $L_2=\emptyset$ when $h\leq 1$.
    \end{mylemma}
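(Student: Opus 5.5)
The idea is to relate every closed walk in $M$ to a cycle of exactly double length in $G$. Each $v\in L_2$ is a degree-three vertex adjacent to exactly two vertices $x,y\in H$, so it is the middle vertex of a path $x\,v\,y$ of length two. A cycle of length $j$ in $M$ therefore lifts to a cycle of length $2j$ in $G$, provided the chosen middle vertices are distinct. This holds automatically when we use distinct edges of $M$, since distinct edges correspond to distinct vertices of $L_2$. We then check the three items in turn.

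For (1), suppose $M$ has two parallel edges $xy$. Then there are distinct $v,v'\in L_2$ with $N_G(v)\cap H=N_G(v')\cap H=\{x,y\}$, and $x\,v\,y\,v'\,x$ is a $C_4$ in $G$. This is impossible in a counter-example. Loops cannot occur, since $N_G(v)\cap H$ has exactly two elements.

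For (2), take a cycle $x_1x_2\cdots x_jx_1$ in $M$ with $j=2^k$. Since $M$ is simple, $j\ge 3$; also $j=2$ would be a double edge, already excluded by (1). Replace each edge $x_ix_{i+1}$ by its witness $v_i\in L_2$. The witnesses are pairwise distinct and lie in $L$, while the $x_i$ lie in $H$ and are distinct. Hence $x_1v_1x_2v_2\cdots x_jv_jx_1$ is a cycle of length $2j=2^{k+1}$ in $G$, a contradiction.

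For (3), the plan is to exhibit a subgraph of $M$ of minimum degree at least three, should one exist, as a minor of $G$ that is strictly smaller than $G$, and use minimality. Let $S\subseteq V(M)$ induce a subgraph $M[S]$ with minimum degree at least three. Consider the subgraph $G'$ of $G$ consisting of $S$, the witnesses $v$ of the edges of $M[S]$, and the edges $xv$, $vy$. In $G'$, contract one of the two edges at each witness; each contraction is an edge contraction, so the result is a minor of $G$. That minor is isomorphic to $M[S]$. It is strictly smaller than $G$: it has $|S|\le h<n$ vertices, since $L\neq\emptyset$ by Lemma~\ref{lem:red}(1). It has minimum degree at least three. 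So by minimality of $G$ (within the minor-closed class), $M[S]$ must contain a power-of-two cycle, contradicting (2). Hence $M$ is $2$-degenerate.

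The only delicate point is being careful about what minimality buys. $M[S]$ is a graph on fewer vertices that satisfies the degree hypothesis, so it is not a counter-example and must contain a cycle of length $2^k$ with $k\ge 2$. Item (2) forbids exactly this. So we need not even argue via contraction of $G$: we only need $|S|<n$, and the minimality definition applies to any graph. The contraction viewpoint matters only for restricted classes $\mathcal F$, where one must check that $M[S]\in\mathcal F$.

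Finally, the counting claim follows from Lemma~\ref{lem:2-degenerate-bound}. The edges of $M$ are in bijection with $L_2$. If $h\ge 2$, apply the lemma with $\kappa=1$, $\iota=0$ after merging the components of $M$; directly, each nontrivial component $C$ satisfies $|E(C)|\le 2|C|-3$, and summing gives $|E(M)|\le 2h-3$. If $h\le 1$, then $M$ has at most one vertex and no loops, so $L_2=\emptyset$.
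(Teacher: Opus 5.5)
Your proposal is correct and matches the paper's proof essentially step for step: parallel edges of $M$ yield a $C_4$ in $G$, a cycle of length $t\ge 3$ in $M$ lifts to a cycle of length $2t$ in $G$, and a subgraph of $M$ with minimum degree at least three would be a counter-example on at most $h<n$ vertices (your contraction remark just makes explicit why this also works inside a minor-closed class). For the count, Lemma~\ref{lem:2-degenerate-bound} cannot be applied to $M$ itself with $\iota=0$, since $M$ may have isolated vertices; your per-component summation, together with the trivial case $L_2=\emptyset$ where $0\le 2h-3$, is exactly the paper's $\kappa\ge\iota+1$ computation.
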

    \begin{proof}
        Suppose by contradiction there exist multiple edges $xy$ in $M$ for some $x,y \in H$.
        Then, there exist distinct vertices $u,v \in L_2$ such that $N(u) \cap H = N(v) \cap H = \{x,y\}$.
        However, it implies the existence of a cycle $x\,u\,y\,v\,x$, thus contradicting our assumption that $G$ is a counter-example.
        The latter implies (1).

        As $M$ is simple, it has no cycle of length $2^k$ for $k=1$.
        Furthermore, let $x_0x_1\cdots x_{t-1}$ be a simple cycle in $M$, of length $t \ge 3$.
        Again using the fact that $M$ is simple, there exist pairwise different vertices $u_0,u_1,\ldots u_{t-1}$ such that, for every $i$ with $0 \le i < t$, $N(u_i) \cap H = \{x_i,x_{i+1}\}$ holds (indices are taken modulo $t$). Then,
        \[
        x_0\,u_0\,x_1\,u_1\,\cdots\,x_{t-1}\,u_{t-1}\,x_0
        \]
        is a simple cycle of length $2t$ in $G$. Therefore, if $t=2^k$ for some $k\geq 2$, this would contradict our assumption that $G$ is a counter-example.
        The latter implies (2).

        Suppose now by contradiction there exists a subgraph $M'$ of $M$ with minimum degree at least three (possibly, $M = M'$).
        As $M$ is simple, so is $M'$, and as $M$ has no power-of-two cycle, neither does $M'$.
        Hence, $M'$ is a counterexample.
        However, by Lemma~\ref{lem:red}(1) we obtain that $\ell \ge 1$, and so, $|M'| \le |M| = h = n - \ell < n$, thus contradicting the minimality of $G$.
        The latter implies (3).

        Finally, as $M$ is simple, its number of edges equals $|L_2|$.
        If $h \le 1$, then clearly $L_2 = \emptyset$.
        From now on, $h \ge 2$.
        Since $2h - 3 \ge 1$, the inequality $|L_2| \le 2h-3$ vacuously holds if $L_2 = \emptyset$.
        Otherwise, $\kappa \ge \iota + 1$, with $\kappa$ and $\iota$ respectively denoting the number of connected components and the number of isolated vertices of $M$.
        As $M$ is $2$-degenerate, by Lemma~\ref{lem:2-degenerate-bound} we obtain $|L_2| \le 2h - 3\kappa + \iota \le 2h - 3 - 2\iota \le 2h-3$.
        \end{proof}

        \begin{mycorollary}\label{cor:notall}
            $L=L_2$ is impossible in a minimal counterexample.
        \end{mycorollary}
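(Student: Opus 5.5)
Suppose, towards a contradiction, that $L=L_2$ in a minimal counter-example $G=(V,E)$. The plan is to compare the inequality~\eqref{eq:count} with the edge bound of Lemma~\ref{lem:D}, and show that they pull in opposite directions.

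First, I fix the basic quantities. Since $L_0=L_1=\emptyset$, every edge leaving $H$ goes to $L_2$. Each vertex of $L_2$ sends exactly two edges to $H$. By Lemma~\ref{lem:red}(3), every edge incident to $H$ is counted in $e(L,H)$, so
\[
    e(L,H)=2|L_2|=2\ell .
\]
Next, I check that $h\ge 2$. Lemma~\ref{lem:red}(1) gives $\ell\ge 1$, so $L_2\neq\emptyset$. By the last statement of Lemma~\ref{lem:D}, $L_2=\emptyset$ whenever $h\le 1$, so here we must have $h\ge 2$.

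Now I combine the two bounds. From~\eqref{eq:count} we get $4h\le e(L,H)=2|L_2|$, that is, $|L_2|\ge 2h$. On the other hand, Lemma~\ref{lem:D} gives $|L_2|=|E(M)|\le 2h-3$ since $h\ge 2$. Together these yield $2h\le 2h-3$, which is a contradiction.

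The argument has no real obstacle. The only point needing care is to justify $h\ge 2$, so that the bound $2h-3$ of Lemma~\ref{lem:D} applies rather than its degenerate case $h\le 1$. That is handled by the observation $\ell\ge 1$ from Lemma~\ref{lem:red}(1).
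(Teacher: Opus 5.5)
Your proof is correct and uses the same key ingredient as the paper: the $2$-degeneracy of the auxiliary multigraph $M$ from Lemma~\ref{lem:D}. The paper argues pointwise that $L=L_2$ forces $d_M(x)=d_G(x)\ge 4$ for every $x\in H$, contradicting Lemma~\ref{lem:D}(3) directly; you instead sum these degrees via~\eqref{eq:count} to get $|E(M)|=|L_2|\ge 2h$ and contradict the derived bound $|L_2|\le 2h-3$, and your explicit check that $h\ge 2$ (so $M$ is nonempty) is a detail the paper leaves implicit.
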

        \begin{proof}
            If $L=L_2$ then by Lemma~\ref{lem:red}(3), for every $x\in H$, all of its neighbors lie in $L_2$.
            It implies $d_M(x)=d_G(x)\geq 4$ for all $x\in H$, where $M$ is the multigraph defined in Lemma~\ref{lem:D}.
            In particular, $M$ has minimum degree at least four, thus contradicting Lemma~\ref{lem:D}(3).
        \end{proof}

        We can finally state the main result of this part:
        \begin{mytheorem}\label{thm:density}
            For any minimal counter-example $G=(V,E)$, $\ell \ge 2n/3 + 1$ holds.
        \end{mytheorem}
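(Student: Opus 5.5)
Starting from the double count in the proof of Proposition~\ref{prop:density}, we have $4h \le e(L,H) = |L_1| + 2|L_2|$, where $e(L,H)$ counts the $L$--$H$ edges. The goal $\ell \ge 2n/3+1$ is equivalent to $\ell \ge 2h+3$, since $n=\ell+h$. So it suffices to bound $|L_1|+2|L_2|$ by $\ell$ plus something small.

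Write $|L_1|+2|L_2| = (|L_0|+|L_1|+|L_2|) + |L_2| - |L_0| = \ell + |L_2| - |L_0|$. Hence
\[
4h \le \ell + |L_2| - |L_0| \le \ell + |L_2| .
\]
Lemma~\ref{lem:D} gives $|L_2| \le 2h-3$ when $h \ge 2$, which yields $4h \le \ell + 2h - 3$, that is, $\ell \ge 2h+3$. This is exactly the claim.

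It remains to treat small $h$.
\begin{itemize}
\item If $h\le 1$, then $L_2=\emptyset$ by Lemma~\ref{lem:D}, so $\ell \ge 4h$. For $h=1$ this gives $\ell\ge 4 \ge 2\cdot 1+3$ only if $\ell \ge 5$.
\item If $h=0$, we need $\ell \ge 3$, which holds trivially since a graph of minimum degree three has at least $4$ vertices.
\end{itemize}

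For $h=1$ we need $\ell\ge5$. If $\ell=4$, then $n=5$, and the single $H$ vertex is adjacent to all four $L$ vertices. Each $L$ vertex then needs two more neighbours among the other three $L$ vertices, so $G[L]$ is a $2$-regular graph on $4$ vertices, i.e.\ a $C_4$. That contradicts $G$ being a counter-example; alternatively, this case is excluded by Royle's verification for small orders.

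The main subtlety is making sure the boundary cases ($h\le1$) are consistent with the $+1$ and that the estimate $|L_2|\le 2h-3$ is applied only for $h\ge2$. Otherwise the argument is a one-line strengthening of Proposition~\ref{prop:density} using Lemma~\ref{lem:D}. Corollary~\ref{cor:notall} is not even needed here, though one could also use $|L_0|\ge0$ more cleverly if a sharper bound were desired.
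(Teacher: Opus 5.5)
Your proof is correct and follows the paper's argument. The paper also reduces the claim to $n \ge 3h+3$, which is equivalent to your $\ell \ge 2h+3$. It handles $h=0$ via $n \ge 4$ and $h=1$ via the same $C_4$ argument on $G[L]$ when $n=5$. For $h \ge 2$ it combines $4h \le |L_1|+2|L_2|$ with $|L_2| \le 2h-3$ from Lemma~\ref{lem:D}; your dropping of $|L_0|$ is just the paper's $\ell \ge |L_1|+|L_2|$ written differently.
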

        \begin{proof}
            It suffices to prove $h \le n/3-1$.
            This is true if $h=0$ because $n \ge 4$, and so, $n/3 -1 > 0$.
            Assume now $h = 1$.
            Since the unique vertex of $H$ has at least four neighbors, $n \ge 5$ holds.
            However, if $n = 5$, then the four vertices of $L$ must induce a $2$-regular graph, and so, a $C_4$, thus contradicting our assumption that $G$ is a counter-example.
            Therefore, $n \ge 6$, and so, $n/3-1 \ge 1$ holds.
            Finally, for $h\geq 2$, we obtain from \eqref{eq:count} and Lemma~\ref{lem:D} that
            \[
                \ell\ \geq\ |L_1|+|L_2|\ =\ \bigl(|L_1|+2|L_2|\bigr)-|L_2|
                \ \geq\ 4h-(2h-3)\ =\ 2h+3 .
            \]
            Then, $n=\ell+h\geq 3h+3$, or equivalently, $h \le n/3 -1$.
        \end{proof}

    \subsection{Iterating the reduction}\label{ssec:subdivisions}
    Recall for what follows that a \emph{$t$-subdivision} of a multigraph $K$ is the graph obtained by replacing all the edges of $K$ with internally vertex-disjoint paths of length $t$.
    -- So, in particular, a loop is replaced by a cycle of length $t$. --
    We now generalize Lemma~\ref{lem:D}, as follows:

    \begin{mytheorem}\label{thm:Dplus}
    If $G$ is any minimal counterexample, then, it does not contain any $2^j$-subdivision of $K$, for any integer $j \ge 1$ and any multigraph $K$ with minimum degree at least three.
    \end{mytheorem}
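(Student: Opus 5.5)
The plan is a direct reduction: if $G$ contains such a subdivision, then $K$ itself is a strictly smaller counter-example. Suppose by contradiction that $G$ contains a subgraph $S$ which is a $2^j$-subdivision of some multigraph $K$ with minimum degree at least three, for some $j \ge 1$. For each edge $e$ of $K$ (including loops), write $P_e$ for the path of length $2^j$ in $S$ that replaces it; when $e$ is a loop, $P_e$ is a cycle of length $2^j$. The argument then has two steps.

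\textbf{Step 1: cycles of $K$ lift to cycles of $S$ with lengths multiplied by $2^j$.} Let $C$ be a closed trail of $K$ which is a cycle in the multigraph sense. This covers three cases: a loop (length $1$), a pair of parallel edges (length $2$), or a simple cycle of length $t \ge 3$. Replacing every edge $e$ of $C$ by $P_e$ gives a closed walk in $S \subseteq G$ of length $2^j t$. This walk is a simple cycle, because the paths $P_e$ are internally vertex-disjoint, their internal vertices avoid $V(K)$, and $C$ visits each branch vertex at most once. This covers the loop case ($t=1$, where the lifted cycle is $P_e$ itself) and the parallel-edge case ($t=2$, two internally disjoint $x$--$y$ paths). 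Hence if $K$ had a cycle of length $2^k$ for some $k \ge 0$, then $G$ would have a cycle of length $2^{j+k}$, which is a power of two. This contradicts the fact that $G$ is a counter-example.

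\textbf{Step 2: $K$ is a smaller counter-example.} By Step 1 with $k=0$, $K$ has no loops, and with $k=1$, it has no parallel edges. So $K$ is a simple graph. Its minimum degree is at least three by assumption, and Step 1 with $k\ge 2$ shows it has no cycle of length a power of two. Therefore $K$ is a counter-example. For its order, since $K$ has minimum degree at least three it has at least one edge, and $j\ge 1$ means each edge contributes $2^j-1\ge 1$ internal subdivision vertices. Hence
\[
|V(K)| \;<\; |V(K)| + (2^j-1)|E(K)| \;=\; |V(S)| \;\le\; n .
\]
This contradicts the minimality of $G$ with respect to $(|V(G)|,|E(G)|)$ in the lexicographic order.

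The only delicate point is the bookkeeping in Step 1, namely that loops and parallel edges of $K$ produce genuine simple cycles in $G$ of lengths $2^j$ and $2^{j+1}$. This is exactly where $j\ge 1$ is used. It also explains why the statement allows $K$ to be a multigraph while the conclusion still yields a simple counter-example. The argument is a direct generalization of the proof of Lemma~\ref{lem:D}, which corresponds to the case $j=1$ with $K = M$, except that here we argue about an arbitrary subdivided subgraph rather than about the specific multigraph $M$.
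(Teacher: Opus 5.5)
Your proof is correct and follows the paper's argument: you lift a cycle of length $2^t$ in $K$ to a cycle of length $2^{j+t}$ in $S$, use $t=0,1$ to exclude loops and parallel edges, conclude that $K$ is a simple counter-example, and use $j\ge 1$ to get $|V(K)|<|V(S)|\le n$. One small correction to your closing remark: $j\ge 1$ is really needed only for that strict order inequality, not in Step 1, since for $j=1$ a subdivided loop would be a $2$-cycle, which cannot exist in the simple graph $G$ anyway.
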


\begin{proof}
Suppose by contradiction the existence of some $j$ and $K$ such that $G$ contains a $2^j$-subdivision of $K$.
In what follows, we denote by $S$ the subdivision.
For any integer $t \ge 0$, the existence of a cycle of length $2^t$ in $K$ would result in a cycle of length $2^{j+t}$ in $S$, which is forbidden in $G$.
In particular, $K$ has no loops ($t=0$), and no parallel edges ($t=1$).
It implies that $K$ is a simple graph of minimum degree at least three with no power-of-two cycles, \textit{i.e.}, a counter-example.
However, because we assume $j \ge 1$, $|V(K)| < |V(S)| \le n$ should hold, thus contradicting the minimality of $G$.
\end{proof}

We note that the proof of Lemma~\ref{lem:D} essentially reduces to the case $j=1$.

\section{Some new conjectures}\label{sec:conjecture}

We introduce two conjectures in what follows, which we relate to the Erd\H{o}s-Gy\'arf\'as conjecture.

    \subsection{Allowing one vertex of degree at most two}
    Our first conjecture is a modest loosening of the original Erd\H{o}s-Gy\'arf\'as conjecture.

        \begin{myconjecture}\label{conj:deg-2}
            Every graph with at most one vertex of degree at most two and at least one vertex of degree at least three has a power-of-two cycle.
        \end{myconjecture}

    Based on our techniques in Sec.~\ref{sec:connectivity}, the following equivalence is proved:

        \begin{mytheorem}
        Conjecture~\ref{conj:deg-2} is equivalent to the Erd\H{o}s-Gy\'arf\'as conjecture.
        \end{mytheorem}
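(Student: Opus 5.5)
The plan is to prove the two implications separately. The direction ``Conjecture~\ref{conj:deg-2} implies Erd\H{o}s-Gy\'arf\'as'' is immediate. The converse uses the $1$-clique-sum (gluing along a common vertex), as in the proof of Theorem~\ref{thm:2-vertex-connected}, to repair the single low-degree vertex without creating new cycles.

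\emph{Conjecture~\ref{conj:deg-2} implies Erd\H{o}s-Gy\'arf\'as.} Let $G$ be a graph with minimum degree at least three. Then $G$ has no vertex of degree at most two, which is at most one such vertex. Since $G$ is nonempty, every vertex has degree at least three, so $G$ has at least one such vertex. Conjecture~\ref{conj:deg-2} therefore gives $G$ a power-of-two cycle.

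\emph{Erd\H{o}s-Gy\'arf\'as implies Conjecture~\ref{conj:deg-2}.} Let $G$ have at most one vertex of degree at most two and at least one vertex of degree at least three.
\begin{enumerate}
\item If $G$ has no vertex of degree at most two, its minimum degree is at least three and the Erd\H{o}s-Gy\'arf\'as conjecture applies directly.
\item Otherwise, let $v$ be the unique vertex with $d_G(v)\le 2$.
\begin{enumerate}
\item If $d_G(v)=0$, then $G-v$ is nonempty, because it contains the vertex of degree at least three. Removing an isolated vertex changes no other degree, so $G-v$ has minimum degree at least three. Its power-of-two cycle, given by the Erd\H{o}s-Gy\'arf\'as conjecture, is a cycle of $G$.
\item If $d_G(v)\in\{1,2\}$, take three disjoint copies $G^1,G^2,G^3$ of $G$ and identify the three copies $v^1,v^2,v^3$ of $v$ into a single vertex $c$. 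Call the result $G^\star$.
\end{enumerate}
\end{enumerate}

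For the case $d_G(v)\in\{1,2\}$, the next step is to check that $G^\star$ has minimum degree at least three.
\begin{itemize}
\item Every vertex other than $c$ keeps its degree from $G$, which is at least three.
\item The vertex $c$ has degree $3d_G(v)\ge 3$.
\end{itemize}
The Erd\H{o}s-Gy\'arf\'as conjecture then yields a cycle $C$ in $G^\star$ of length a power of two.

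The remaining step, and the only one needing care, is to pull $C$ back to $G$. Since $c$ is a cut-vertex separating the copies, no cycle of $G^\star$ can use vertices of two different copies other than $c$. Indeed, $C$ would then have to pass through $c$ at least twice, contradicting that it is a simple cycle. Hence $C$ lies entirely inside some $G^i$, where $c$ plays the role of $v^i$. So $C$ corresponds to a cycle of $G$ of the same length.

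Note that using two copies would not suffice when $d_G(v)=1$, since $c$ would then have degree only two; three copies handle both $d_G(v)=1$ and $d_G(v)=2$ uniformly. Disconnectedness of $G$ causes no issue, because the construction only requires every vertex other than $c$ to keep degree at least three.
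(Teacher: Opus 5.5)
Your proof is correct. It uses the same idea as the paper: glue copies of $G$ along the low-degree vertex $v$, and use the fact that the glued vertex is a cut-vertex, so every cycle of the glued graph lies in a single copy. The difference is in the degree-one case. The paper glues only two copies, which handles $d_G(v)=2$. For $d_G(v)=1$ it first deletes $v$. Only the neighbour of $v$ loses degree, so $G\setminus v$ either has minimum degree at least three or has exactly one vertex of degree two, and the argument falls back to the earlier cases. Your three-copy gluing handles $d_G(v)\in\{1,2\}$ in one step and needs no preliminary deletion. The price is a larger Erd\H{o}s-Gy\'arf\'as counter-example, with $3n-2$ vertices instead of at most $2n-1$. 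This only matters if you want to transfer order bounds between the two conjectures. For example, the verification of Theorem~\ref{thm:verification} up to order $40$ gives Conjecture~\ref{conj:deg-2} up to order $20$ with the paper's construction, but only up to order $14$ with yours. You also write out the cut-vertex argument that pulls the cycle back to $G$, which the paper leaves implicit.
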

        \begin{proof}
            If Conjecture~\ref{conj:deg-2} is true, then in particular it holds for graphs of minimum degree at least three, thus implying the Erd\H{o}s-Gy\'arf\'as conjecture.
            Conversely, assume the existence of some counter-example $G$ to Conjecture~\ref{conj:deg-2}.
            If $G$ has minimum degree at least three, then, it is also a counter-example to the Erd\H{o}s-Gy\'arf\'as conjecture.
            Thus from now on, we assume the existence of a unique vertex $v$ of $G$ such that $d(v) \le 2$.
            If $d(v) = 0$, then, $G \setminus v$ is a counter-example to the Erd\H{o}s-Gy\'arf\'as conjecture.
            Furthermore, if $d(v)=1$, then, $G \setminus v$ is a smaller counter-example to Conjecture~\ref{conj:deg-2} with at most one vertex of degree two (namely, the former neighbor of $v$) and all the other vertices of degree at least $3$.
            Hence, we may assume in what follows $d(v) = 2$.
            In this situation, we take two copies $G_1,G_2$ of $G$, and we identify their degree-two vertices $v_1,v_2$, which results in a counter-example to the Erd\H{o}s-Gy\'arf\'as conjecture.
        \end{proof}

    It could be interesting to study the natural variant of Conjecture~\ref{conj:deg-2} where we allow up to $k$ vertices of degree at most two, for some constant $k \ge 2$.

    \subsection{A detour through edge-colored graphs}

    By an \emph{edge-colored graph}, we mean a pair $(G,\texttt{col})$ where $G=(V,E)$ is a graph and $\texttt{col} : E \mapsto \mathbb{N}$.
    A \emph{rainbow cycle} is a cycle of $G$ whose all edges have pairwise different colors.

        \begin{myconjecture}\label{conj:edge-colors}
            Let $(G,\texttt{col})$ be an edge-colored graph such that every vertex is incident to at least three colors, and the edges of any color induce a clique of size at least three.
            There is a rainbow cycle in $G$ of length equal to some power of two.
        \end{myconjecture}

        Roughly, we increase the bound on the minimum degree, but at the price of adding edge-colors.
        Our reductions in what follows share similarities with those of Tranquilli in~\cite{Tra26}.

        \begin{mytheorem}
            Conjecture~\ref{conj:edge-colors} for general graphs is equivalent to the Erd\H{o}s-Gy\'arf\'as conjecture for bipartite graphs.
        \end{mytheorem}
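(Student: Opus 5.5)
The natural tool is the incidence (vertex–clique) bipartite graph, in both directions.

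\emph{Colored graphs to bipartite graphs.} Let $(G,\texttt{col})$ be a counter-example to Conjecture~\ref{conj:edge-colors}. I will first normalize it: edges of one color induce a clique, so two distinct color classes share at most one vertex. If a color class were the union of several cliques I would recolor each clique with a fresh color. This keeps every vertex incident to at least three colors, and any rainbow cycle after recoloring was already rainbow before, so we still have a counter-example. I then build the bipartite graph $B$ whose sides are $V$ and the set of colors $\mathcal{C}$, with $v$ adjacent to $c$ whenever $v$ lies in the clique of color $c$.

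Degrees in $B$ are at least three on both sides. A vertex of $V$ meets at least three colors by hypothesis, and a color vertex has degree equal to its clique size, which is at least three. A cycle of length $2t$ in $B$ has the form $v_0\,c_0\,v_1\,c_1\cdots v_{t-1}\,c_{t-1}\,v_0$ with distinct $v_i$ and distinct $c_i$, where $v_i,v_{i+1}$ both lie in clique $c_i$. Hence $v_iv_{i+1}$ is an edge of color $c_i$. For $t\ge 3$ this gives a rainbow cycle of length $t$ in $G$, so $2t=2^{k}$ would give a rainbow power-of-two cycle $t=2^{k-1}$. The case $t=2$, a $4$-cycle in $B$, would mean two colors sharing two vertices $v_0,v_1$. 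Then the edge $v_0v_1$ carries both colors, which is impossible since $G$ is simple and each edge has one color. So $B$ is a bipartite counter-example.

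\emph{Bipartite graphs to colored graphs.} Let $B$ be a bipartite counter-example with sides $X,Y$, all degrees at least three. I will form $G$ on vertex set $X$ and, for each $y\in Y$, add a clique on $N_B(y)$ colored $y$. Two vertices $y,y'$ share at most one common neighbor, since otherwise $B$ has a $C_4$. Hence these cliques are edge-disjoint, the coloring is well defined, and $G$ is simple. Each clique has size $d_B(y)\ge 3$. Each $x\in X$ lies in $d_B(x)\ge3$ cliques, each contributing a distinct color at $x$, so $x$ is incident to at least three colors.

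A rainbow cycle $x_0x_1\cdots x_{t-1}$ of length $t\ge3$ with colors $y_0,\dots,y_{t-1}$ pairwise distinct lifts to the cycle $x_0\,y_0\,x_1\cdots x_{t-1}\,y_{t-1}\,x_0$ of length $2t$ in $B$. So a rainbow cycle of length $2^k$, with $k\ge2$, yields a $2^{k+1}$-cycle in $B$, a contradiction. Rainbow cycles of length $2$ do not exist in a simple graph.

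\emph{Main obstacle.} The delicate point is the normalization in the first direction. I must ensure that "edges of any color induce a clique" is read as each color class being a single clique, and check that distinct colors yield distinct clique vertices in $B$, which is what makes the cycle lifting injective. Everything else is bookkeeping, and the two constructions are mutually inverse, which proves the equivalence.
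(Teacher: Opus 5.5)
Your proof is correct and is essentially the paper's argument, phrased contrapositively via counter-examples: the vertex--color incidence bipartite graph in one direction, and in the other the half-square of $B$ on $X$ with each edge colored by the unique common neighbor in $Y$. The one flaw is the recoloring normalization: it is vacuous under the paper's convention that the edges of each color induce a single clique, and its justification is backwards, since splitting a color class keeps every old rainbow cycle rainbow but can create new ones (e.g.\ a $4$-cycle using edges from two different cliques of the same original color), so simply drop that step.
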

        \begin{proof}
            In one direction, let $(G,\texttt{col})$ be an arbitrary edge-colored graph.
            Assume without loss of generality the set of edge-colors equals $\{1,2,\ldots,k\}$ for some integer $k \ge 1$.
            Let $U = \{u_1,u_2,\ldots,u_k\}$ be made of $k$ fresh new vertices (\textit{i.e.}, not in $V(G)$).
            We construct a bipartite graph $B$ with partite sets $V(G)$ and $U$, such that there is an edge $vu_i$ if and only if $v$ is incident in $G$ to some edge of color $i$.
            By doing so, if every vertex $v$ is incident to at least three colors in $G$, then, it has degree at least three in $B$.
            Similarly, if the edges of any color $i$ induce a clique of size at least three in $G$, then, the degree of $u_i$ in $B$ is also at least three.
            {Under these assumptions, $B$ contains no cycle of length four.
            Indeed, a cycle $x\,u_i\,y\,u_j\,x$ with $i \ne j$ would force the edge $xy$ of $G$ to have both colors $i$ and $j$, which is impossible.}
            Assuming the latter, we further claim that the existence of a rainbow cycle of length $t$ {($t \ge 3$)} in $(G,\texttt{col})$ is equivalent to that of a simple cycle of length $2t$ in $B$.
            Indeed, let $v_0,v_1,\ldots,v_{t-1},v_0$ be any rainbow cycle of $(G,\texttt{col})$.
            For every $i$ with $0 \le i < t$, let $j(i) = \texttt{col}(v_iv_{i+1})$ (indices are taken modulo $t$).
            Then, $v_0,u_{j(0)},v_1,u_{j(1)},\ldots,v_{t-1},u_{j(t-1)},v_0$ is a simple cycle of $B$.
            Conversely, let $v_0,u_{j(0)},v_1,u_{j(1)},\ldots,v_{t-1},u_{j(t-1)},v_0$ be any simple cycle of $B$.
            For every $i$ with $0 \le i < t$, by construction of $B$, the vertices $v_i,v_{i+1}$ are both incident in $G$ to some edge of color $j(i)$.
            As we assume that all the edges of color $j(i)$ induce a clique of $G$, we obtain that $v_iv_{i+1}$ is an edge of $G$ such that $\texttt{col}(v_iv_{i+1}) = j(i)$.
            Hence, $v_0,v_1,\ldots,v_{t-1},v_0$ is a rainbow cycle of $(G,\texttt{col})$.
            {Assuming the Erd\H{o}s-Gy\'arf\'as conjecture for bipartite graphs, $B$ contains a cycle of length $2^q$ with $q \ge 3$, since it has no $C_4$.
            Applying the above transformation with $t=2^{q-1}$, we obtain the existence of a rainbow cycle of power-of-two length in $(G,\texttt{col})$, thus proving Conjecture~\ref{conj:edge-colors}.}

            In the other direction, let $B=(X\cup Y,E)$ be a bipartite graph with respective partite sets $X$ and $Y$.
            Assume further there is no cycle of length four in $B$.
            Let the half-square $G_X$ be the graph with vertex set $X$ such that there is an edge $xx'$ in $G_X$ if and only if $x$ and $x'$ have a common neighbor in $B$.
            We define a function $\texttt{col} : E(G_X) \mapsto Y$ such that, for every edge $xx'$, $\texttt{col}(xx')$ is a vertex $y \in N_B(x) \cap N_B(x')$.
            Note that since we assume there is no cycle of length four in $B$, there is only one possibility for $\texttt{col}(xx')$.
            By construction of $G_X$, each vertex $x$ is incident to an edge of color $y$ for every of its neighbors $y$ in $B$.
            Therefore, if $x$ has degree at least three in $B$, then it is incident to at least three colors in $G_X$.
            Similarly, for every $y \in Y$, the set $N_B(y)$ is a clique of $G_X$ that is induced by all its edges with color $y$.
            So, if $d_B(y) \ge 3$ for every $y \in Y$, then the edges of any given color induce a clique of size at least three.
            To prove that Conjecture~\ref{conj:edge-colors} implies the Erd\H{o}s-Gy\'arf\'as conjecture for bipartite graphs, we prove next that there is a simple cycle of length $2t$ in $B$ ($t \ge 3$) if and only if there is a rainbow cycle of length $t$ in $(G_X,\texttt{col})$. Indeed, if $x_0,y_0,x_1,y_1,\ldots,x_{t-1},y_{t-1},x_0$ is a simple cycle of $B$, then, $x_0,x_1,\ldots,x_{t-1},x_0$ is a cycle of $G_X$ where the edges have the pairwise different colors $y_0,y_1,\ldots,y_{t-1}$. Conversely, let $x_0,x_1,\ldots,x_{t-1},x_0$ be a rainbow cycle of $(G_X,\texttt{col})$, and for every $i$ with $0 \le i < t$, let $y_i := \texttt{col}(x_ix_{i+1})$ (indices are taken modulo $t$). Since the cycle is rainbow, the vertices $y_0,y_1,\ldots,y_{t-1}$ are pairwise different. Furthermore, by construction of $G_X$, $y_i \in N_B(x_i) \cap N_B(x_{i+1})$ for every $i$. Therefore, $x_0,y_0,x_1,y_1,\ldots,x_{t-1},y_{t-1},x_0$ is a simple cycle of $B$.
        \end{proof}

\section{Computer searches}\label{sec:verif}

{We exploit the structural results of Section~\ref{sec:eg-density} to
restrict the search for minimal counter-examples in three graph families.
For general graphs, we can use both the restrictions proved using deletions
in Section~\ref{ssec:setup} and the stronger bounds obtained by the doubling
reduction in Section~\ref{ssec:doubling-reduction}. However, for bipartite graphs, we
only use the former, since edge contractions need not preserve
bipartiteness. For cubic graphs, all these degree restrictions hold
trivially, so we impose degree exactly three and, in some searches,
connectedness.}
We verified the Erd\H{o}s--Gy\'arf\'as conjecture on the three aforementioned families of graphs
by exhaustive search.
The searches use the SAT modulo symmetries framework of Kirchweger and
Szeider~\cite{KiSz21}, which enumerates the isomorphism classes of graphs
satisfying a propositional formula.  We incorporate degree constraints from
Section~\ref{sec:eg-density} into this formula and compare running times with
and without them in Section~\ref{ssec:ablation}.

\begin{mytheorem}\label{thm:verification}
Let $G$ be a graph of order $n$ and minimum degree at least three, such that at least
one of the following conditions holds:
\begin{enumerate}
    \item $n \leq 40$;
    \item $G$ is bipartite and $n \leq 66$;
    \item $G$ is cubic and $n \leq 48$.
\end{enumerate}
Then $G$ contains a cycle whose length is a power of two.
\end{mytheorem}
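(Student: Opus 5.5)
The proof is a computer-assisted exhaustive search, and the plan is to reduce each of the three items to the non-existence of a \emph{minimal} counter-example within the relevant class, which is what the SAT encoding will rule out. If some graph $G$ violates the statement in item (1), then among all counter-examples of order at most $40$ there is one minimizing $(|V(G)|,|E(G)|)$ lexicographically. This is a minimal counter-example in the sense of the introduction, since every smaller graph of minimum degree at least three has order at most $40$ too. So it suffices to show that no minimal counter-example has order $n\le 40$. The same reduction works for bipartite graphs, a monotone class, and for cubic graphs, where we may additionally pass to a connected component and assume connectedness.

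For each $n$ and each class, I would encode a graph on $n$ labelled vertices by Boolean edge variables. I would add cardinality constraints forcing every degree to be at least three, or exactly three in the cubic case, with a bipartition variable per vertex in the bipartite case. Forbidding all cycles of length $4,8,16,32,64$ directly is infeasible, so I would follow the SMS approach of~\cite{KiSz21}. The $C_4$-freeness is encoded statically, by forbidding every $4$-tuple from forming a $4$-cycle. Longer power-of-two cycles are excluded lazily: each time the solver produces a candidate graph, we search it for a cycle of length $8$, $16$, $32$ or $64$, and if one is found we add a clause forbidding that particular edge set. The SMS minimality check handles isomorphism pruning.

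To make the general search reach $n=40$, I would add the structural constraints proved earlier, all of which hold for a minimal counter-example. From Lemma~\ref{lem:red}, no two vertices of degree at least four are adjacent, and every degree-three vertex has a degree-three neighbour. Theorem~\ref{thm:edge-bound} gives $m\le 2n-2$, and Theorem~\ref{thm:density} gives $\ell\ge 2n/3+1$. Lemma~\ref{lem:D} gives $|L_2|\le 2h-3$. For even $n$, Corollary~\ref{cor:even-biconnected} together with Theorem~\ref{thm:2-vertex-connected} restricts the graph to be biconnected or of the prescribed $1$-clique-sum form. For bipartite graphs I would use only the deletion-based facts, namely Lemma~\ref{lem:red} and the bounds of Sec.~\ref{ssec:setup}, since contractions do not preserve bipartiteness. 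For cubic graphs I would use connectedness and optionally Theorem~\ref{thm:cubic-connectivity}. I would also cross-check small $n$ against the known results~\cite{Royle,Mark04,Bal26,Tra26}.

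The main obstacle is computational. The lazy cycle exclusion may generate very many clauses near $n=40$ and $n=66$, and isomorph rejection must stay sound when it is combined with these non-symmetric structural constraints. They are, however, invariant under relabelling, which I would need to verify. I expect to need symmetry-compatible encodings of the degree classes, and possibly parallel cubing of the search space, to finish the largest cases. Correctness then rests on the soundness of the encoding, which I would justify by checking that every added constraint follows from a lemma stated above.
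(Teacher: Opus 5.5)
There is one genuine gap: a constraint in your list is false as stated, and it breaks soundness. Lemma~\ref{lem:D} gives $|L_2|\le 2h-3$ only for $h\ge 2$. For $h\le 1$ it gives $L_2=\emptyset$, so $|L_2|=0>2h-3$.

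Encoded literally, your constraint is unsatisfiable whenever $h\le 1$. The general search would therefore discard every such graph without examining it. Your cubic search would still catch $h=0$. Non-bipartite graphs with $h=1$, however, would be covered by none of your searches. Since the handshake lemma forces $h\ge 1$ for odd $n$, you would lose, for instance, every odd-order graph with one vertex of degree four and all others of degree three. A zero count would then not prove item~(1). The constraint must be $|L_2|\le\max\{0,2h-3\}$, which is exactly the paper's $(C_3)$. With that correction your plan is sound.

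Apart from this, your route is the paper's. You reduce each item to a minimal counter-example within the relevant class: all graphs; bipartite graphs, as a monotone class using only deletion-based facts; cubic graphs, which may be assumed connected. You then exclude it order by order with SAT-modulo-symmetries searches whose extra constraints are relabelling-invariant consequences of Sections~\ref{sec:connectivity} and~\ref{sec:eg-density}.

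The remaining differences are efficiency choices, not correctness issues. The paper cites prior work for $n\le 17$ and runs the plain minimum-degree encoding for $18\le n\le 30$. It starts the bipartite and cubic searches at orders $41$ and $42$ by invoking part~(1). It does not use $m\le 2n-2$, and it found that adding biconnectivity slowed the solver.
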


The previous bound for general graphs was $31$~\cite{Bal26}.  For bipartite
graphs, the best bound covering the whole class was also $31$, because the
bound of $58$ obtained in~\cite{Tra26} is restricted to cubic bipartite
graphs.

    \subsection{The search model}\label{ssec:model}

    Fix an order $n$.  A search state is a graph on the vertex set
    $\{1,\ldots,n\}$, represented by one Boolean variable for each unordered
    pair of distinct vertices, the variable being true if and only if the pair forms an
    edge.  The solver receives a formula in conjunctive normal form over
    these variables together with a list of forbidden subgraphs.  At order
    $n$ the list contains the cycles of length $4,8,\ldots,2^{\lfloor \log_2
    n\rfloor}$, which are the power-of-two lengths realizable on $n$
    vertices.  An occurrence of a forbidden subgraph in a partially assigned
    graph is detected by a propagator, which is built on the Glasgow subgraph
    solver~\cite{McPr20}. The propagator is invoked on every $k$-th call of the underlying
    satisfiability solver, which is CaDiCaL~\cite{cadical}.  We call $k$ the
    check frequency; its default value is $30$.
    States are explored modulo isomorphism.  A second propagator rejects an
    adjacency matrix that is not lexicographically minimal in its isomorphism
    class.  The minimality test has a step limit; branches for which it is
    inconclusive are retained.  A class may therefore be reported more than
    once, but no class is lost.  Every count reported below is zero, and a
    count of zero means that no graph on $n$ vertices satisfies the formula
    while avoiding every forbidden cycle.
    Each order was searched in a single solver run that exhausted the search
    space and reported the number of graphs found.

    \subsection{Encodings}\label{ssec:encodings}

    Each formula is assembled from the constraint groups below, which are all invariant
    under vertex relabeling.  This invariance property is necessary because we have no control on the representative of each isomorphism class, which is selected by  
    the minimality
    propagator.  In particular, by requiring
    a prescribed vertex order, such as placing the degree-three vertices
    first, we could exclude a class whose minimal representative violates that
    order.

    \begin{itemize}
      \item[$(D)$] every vertex has degree at least three;
      \item[$(K)$] every vertex has degree exactly three;
      \item[$(R_1)$] no edge joins two vertices of degree at least four (follows from
            Lemma~\ref{lem:red}(1));
      \item[$(R_2)$] every vertex of degree three has a neighbor of degree
            three (follows from Lemma~\ref{lem:red}(2));
      \item[$(C_1)$] $h \leq \lfloor n/3 \rfloor$ (follows from
            Proposition~\ref{prop:density});
      \item[$(C_2)$] $h \leq \lfloor n/3 \rfloor - 1$ (follows from
            Theorem~\ref{thm:density});
      \item[$(C_3)$] $|L_2| \leq {\max\{0,2h-3\}}$ (follows from Lemma~\ref{lem:D});
      \item[$(B)$] the graph is bipartite;
      \item[$(N)$] the graph is connected.
    \end{itemize}

    Degree conditions and the cardinality bounds $(C_1)$, $(C_2)$ and $(C_3)$
    are expressed with sequential counters: an output variable is true if and
    only if the corresponding sum reaches its specified threshold.
    For the constraint $(C_3)$, we stress that if  $h \le 1$, for which $L_2=\emptyset$, then the upper bound simplifies to $0$.
    We only used  $(C_3)$ in
    encodings that also contain $(R_1)$.
    Then, to deal with $(C_3)$ the program simply counts the vertices having exactly two neighbors
    of degree at least four. Indeed, under $(R_1)$ a vertex of degree at least four
    has no neighbor of degree at least four, and so the counted vertices are exactly the
    vertices of $L_2$ as defined in the proof of
    Proposition~\ref{prop:density}.

    Bipartiteness is expressed by introducing one additional Boolean variable for each
    vertex, with the requirement that
    the two ends of every edge receive different values.  The value at vertex
    $1$ is fixed, which breaks the symmetry between the two possible valuations while
    not excluding any bipartite graph. We stress that these variables are existentially quantified
    witnesses and they take no part in the isomorphism test.

    Six encodings are used:
    \[
      \begin{array}{ll}
        E_0 = (D), &
        E_1 = (D),(R_1),(R_2),(C_1), \\
        E_2 = (D),(R_1),(R_2),(C_2),(C_3), \qquad &
        E_3 = (D),(R_1),(R_2),(C_1),(B), \\
        E_4 = (K), &
        E_5 = (K),(N).
      \end{array}
    \]

    {The constraint of $E_0$ holds for every counter-example, and those of $E_1$ hold for every minimal counter-example within a monotone class of graphs, such as the general graphs and the bipartite graphs.
    By contrast, the encoding $E_2$ is only valid for the search of a minimal counter-example within general graphs.}
    Indeed, this is because it includes the constraints $(C_2)$ and $(C_3)$, respectively based on Theorem~\ref{thm:density} and Lemma~\ref{lem:D}, the proofs of which rely partly on edge contractions.
    As the latter may not preserve bipartiteness, in the encoding $E_3$, which we use for the search of bipartite counter-examples, we include $(C_1)$ but neither $(C_2)$ nor $(C_3)$. 
    In a cubic graph every vertex has degree three, so $H$ is empty and each
    of $(R_1)$, $(R_2)$, $(C_1)$, $(C_2)$ and $(C_3)$ is trivially satisfied. Consequently, to search for cubic counter-examples, we use both encodings $E_4$ and $E_5$ where we impose the degree-three
    condition directly.
    Note that the constraint $(N)$ is sound for a minimal cubic counter-example,
    which is because a disconnected cubic counter-example has a connected component that is itself a
    cubic counter-example of smaller order.

    \subsection{Coverage}\label{ssec:coverage}

    \begin{proof}[Proof of Theorem~\ref{thm:verification}]
    For (1), suppose that there exists a counter-example of order at most $40$.
    In particular, given a minimal counter-example $G$, it holds $n_G \le 40$.
    Recall that for $n \leq
    15$ the conjecture was verified in~\cite{Royle}, and for $n=16,17$
    in~\cite{Bal26}. Therefore, $n_G \geq 18$ holds.  For every order $n$ with $18 \leq
    n \leq 30$, the search with encoding $E_0$ reports that there is no counter-example of order $n$.  Since $E_0$ imposes no
    condition beyond the minimum degree, that search excludes every
    counter-example of order $n$, minimal or not, and therefore $n_G \geq
    31$.  Thus, it holds $31 \leq n_G \leq 40$.  By Lemma~\ref{lem:red}, $G$
    satisfies $(R_1)$ and $(R_2)$; by Proposition~\ref{prop:density} it
    satisfies $(C_1)$; by Theorem~\ref{thm:density} it satisfies $(C_2)$; and
    by Lemma~\ref{lem:D} it satisfies $(C_3)$. It implies that at order $n_G$, the search with either encoding $E_1$ or $E_2$ should detect the representative of the isomorphism class of $G$.
    However, both searches at that order
    report zero, a contradiction.

    For (2), suppose there exists a minimal bipartite counter-example $G$ with $n_G \leq 66$.  The statements of Section~\ref{ssec:setup} hold for a
    minimal counter-example within any {monotone class of graphs}, and bipartite graphs {form such a class}. Therefore, $G$
    satisfies $(R_1)$, $(R_2)$ and $(C_1)$.
    By part (1), $n_G \geq 41$.  However, for every $n$ with $41 \leq n \leq 66$ the
    search with encoding $E_3$ reports zero, which contradicts the existence
    of $G$.

    Finally, for (3), suppose there exists a cubic counter-example of order at most $48$.
    Let $G$ be a minimal cubic counter-example.
    In particular, it holds $n_G \le 48$.
    By part (1), $n_G \geq 41$ holds.
    Since a cubic graph has even order, it implies $n_G \geq 42$.
    For each even $n$ with $42 \leq n \leq 48$ the search with encoding $E_4$
    reports that a cubic counter-example of order $n$ does not exist.  The latter contradicts the existence of the graph $G$.
    \end{proof}

    \begin{myremark}
    Bipartite orders $30$ to $40$ and cubic even orders $26$ to $40$ were
    searched as well, and both searches reported zero.  Those searches confirm part (1)
    independently within the bipartite graphs and the cubic graphs.
    \end{myremark}

    \subsection{Results}\label{ssec:results}

    Tables~\ref{tab:general}, \ref{tab:bipartite} and~\ref{tab:cubic} summarize
    the main verification searches.  Times are measured in wall-clock seconds
    on a single core.  Runs were distributed over four machines; the captions
    identify the groups with comparable running times.

\begin{table}[ht]
\centering
\begin{tabular}{rrrr}
\hline
$n$ & $E_0$ & $E_1$ & $E_2$ \\
\hline
18 & 5.5 & & \\
19 & 21.0 & & \\
20 & 24.7 & & \\
21 & 21.1 & & \\
22 & 132.5 & & \\
23 & 314.1 & & \\
24 & 264.3 & & \\
25 & 695.3 & & \\
26 & 837.1 & & \\
27 & 1810.0 & & \\
28 & 3939.2 & & \\
29 & 5460.6 & & \\
30 & 9388.7 & & \\
31 & & 277.0 & 4829.2$^{\dagger}$ \\
32 & & 511.6 & 12\,188.1$^{\dagger}$ \\
33 & & 1010.2 & 1090.6 \\
34 & & 1937.0 & 2153.2 \\
35 & & 3783.3 & 4248.1 \\
36 & & 7469.3 & 7355.2 \\
37 & & 16\,812.5 & 17\,393.4 \\
38 & & 31\,354.0 & 33\,994.7 \\
39 & & 129\,176.0 & 117\,305.5 \\
40 & & 221\,415.7 & 220\,309.4 \\
\hline
\end{tabular}
\caption{Exhaustive searches for an $n$-vertex graph of minimum degree at least three and no cycle whose length is a power of two. All the entries are reported as
single-core wall-clock times in seconds. No counter-example was found during those searches.
The $E_0$ column, and every entry of $E_1$ and $E_2$ except the two marked
$\dagger$, ran on one machine.  The $E_0$ column used the default check
frequency of $30$. The unmarked entries of $E_1$ and $E_2$ used check
frequency $5$.  The two entries marked $\dagger$ ran on two other machines at
the default check frequency.}
\label{tab:general}
\end{table}

\begin{table}[ht]
\centering
\begin{tabular}{rr@{\qquad}rr@{\qquad}rr}
\hline
$n$ & time & $n$ & time & $n$ & time \\
\hline
30 & 6.5   & 43 & 114.5  & 56 & 4988.0 \\
31 & 6.7   & 44 & 149.5  & 57 & 7757.5 \\
32 & 9.9   & 45 & 175.6  & 58 & 13\,544.4 \\
33 & 12.2  & 46 & 197.0  & 59 & 22\,768.8 \\
34 & 15.1  & 47 & 245.7  & 60 & 60\,214.9 \\
35 & 17.6  & 48 & 409.9  & 61 & 58\,913.0 \\
36 & 20.1  & 49 & 475.9  & 62 & 54\,521.8 \\
37 & 23.8  & 50 & 715.0  & 63 & 104\,835.6 \\
38 & 38.8  & 51 & 898.3  & 64 & 167\,613.0 \\
39 & 38.5  & 52 & 1092.5 & 65 & 254\,008.9 \\
40 & 52.4  & 53 & 1979.1 & 66 & 202\,884.8 \\
41 & 70.0  & 54 & 2611.7 &    &  \\
42 & 80.9  & 55 & 4471.6 &    &  \\
\hline
\end{tabular}
\caption{Exhaustive searches for a bipartite counter-example of order $n$.
All the entries are reported as
single-core wall-clock times in seconds. No counter-example was found during those searches.
All entries use encoding $E_3$ and a
forbidden-subgraph check frequency of $5$.  Orders $64$ to $66$ forbid the
$64$-cycle in addition to the cycles of length $4$, $8$, $16$ and $32$.  Four
machines were used: for the orders $30$ to $54$, $55$ to $60$, $61$ to $64$
and $65$ to $66$ respectively.  Orders $61$ to $64$ ran on a faster machine than orders
$55$ to $60$, which is why the times at orders $61$ and $62$ fall below the
time at order $60$.}
\label{tab:bipartite}
\end{table}

\begin{table}[ht]
\centering
\begin{tabular}{rrr}
\hline
$n$ & $E_4$ & $E_5$ \\
\hline
26 & 11.2 & \\
28 & 33.3 & \\
30 & 90.2 & \\
32 & 298.8 & \\
34 & 678.2 & 180.9 \\
36 & 2157.8 & 575.6 \\
38 & & 1813.4 \\
40 & 10\,970.4 & 6944.4 \\
42 & 25\,211.2 & 20\,723.6 \\
44 & 60\,028.7 & 32\,640.9 \\
46 & 163\,436.1 & \\
48 & 325\,809.2 & \\
\hline
\end{tabular}
\caption{Exhaustive searches for a cubic counter-example of order $n$.  
All the entries are reported as
single-core wall-clock times in seconds. No counter-example was found during those searches.
The encoding $E_4$ solely imposes degree exactly three. For $E_5$, connectedness is further imposed through a built-in option of the solver, and $4$-cycle freeness through additional clauses in the formula. In particular, to forbid the existence of a $C_4$, if we use $E_4$ then we use the propagator, whereas if we use $E_5$ then the condition is directly encoded in the formula.
In
the $E_4$ column, orders $26$ to $36$ used the default check frequency of
$30$ and ran on one machine, orders $40$ to $44$ used frequency $5$ on a
second, and orders $46$ and $48$ used frequency $5$ on a third; the $E_5$
column used frequency $5$ on a fourth.  Order $46$ was searched twice with
$E_4$, on two machines, respectively in $163\,436.1$ and $133\,424.6$ seconds.}
\label{tab:cubic}
\end{table}

    The searches reported in the three tables total about $30$ core-days.

    \subsection{Effect of the structural constraints}\label{ssec:ablation}
    We made some comparative studies to evaluate the runtime impact of integrating the constraints derived from
    Section~\ref{sec:eg-density} in our encoding. More specifically, we compared the searches at orders $22$ and $29$ for both encodings $E_0$ and $E_2$.
    Both runs used the same machine, with one core per run and the same
    forbidden-subgraph check frequency.  At order $22$ the search took
    $132.5$ seconds with $E_0$ and $50.2$ seconds with $E_2$, hence an improvement with $E_2$ by a factor of
    $2.6$.  At order $29$ the search took $5460.6$ seconds with $E_0$ and $1110.5$
    seconds with $E_2$, hence an improvement with $E_2$ by a factor of about $4.9$.  
    Then, we evaluated the impact of adding different chains of constraints in the encoding.
    For that, orders $31$ to $40$ were verified twice: once with $E_1$ and once with
    $E_2$.  The two encodings rely on different results of
    Section~\ref{sec:eg-density}, and each chain of constraints is complete on its own.  For each $n$ with $33 \le n \le 40$, on
    the same machine and at the same check frequency, the relative percentage difference between the running times for both searches was at most $13$ percent.
    Next, we evaluated the impact of check frequency as follows.
    At order $29$ with encoding $E_2$, the search on one machine took
    $1110.5$ seconds at the default check frequency of $30$ and $106.4$
    seconds at frequency $5$.  The captions of
    Tables~\ref{tab:general} to~\ref{tab:cubic} record the frequency used for
    each entry.

    {Finally, we also considered adding the biconnectivity constraint to $E_1$ and $E_2$ at
    orders $28$, $30$ and $32$. Indeed, this restriction is valid for minimal
    counter-examples of even order by Corollary~\ref{cor:even-biconnected}.
    We encoded it in CNF by requiring the existence of a rooted spanning tree in every
    vertex-deleted graph, using auxiliary binary ranks.
    In the completed matched comparisons on the same machine, with
    forbidden-subgraph check frequency of $5$, we observed that adding this constraint increased
    wall-clock time by approximately $41$--$62$ percent.  The solver
    statistics suggest that the cost of the larger encoding outweighed any
    benefit from restricting the search space.  The main verification searches
    do not impose biconnectivity.}

    \subsection{Validation}\label{ssec:validation}
    This subsection describes the tests that we ran to check that our
    implementation was correct. Indeed, an implementation error could make
    a search return zero graphs. First, to check that no constraint excludes
    graphs that it should keep, we verified that each formula accepts a graph
    (that is, remains satisfiable once the edges of the graph are fixed)
    exactly when the graph satisfies the corresponding constraints: for $E_2$
    on all $236\,926$ labeled graphs of order $7$ with minimum degree at least
    three, and for $E_1$, $E_2$ and $E_4$ on the graphs of orders $10$ to $12$
    generated by \texttt{geng}~\cite{nauty}.  Second, to check that the
    pipeline does report graphs when some exist, we ran it on two instances with
    a nonzero answer: with only the $4$-cycle forbidden, it finds $5$ graphs
    of order $10$ and $36$ cubic graphs of order $14$ up to isomorphism, the
    same counts as \texttt{nauty}.

    \subsection{Reproducibility}\label{ssec:repro}

    The repository accompanying this paper\footnote{{\href{https://github.com/bogdan27182/eg-paper}{GitHub repository}}}
    contains the programs that build the encodings, the validation suites presented in
    Section~\ref{ssec:validation}, the pinned versions of the three solvers,
    and, for every search, its full output and a record of its
    result.
    The solver was not asked to produce a resolution proof. Therefore, a search must be rerun in order to verify the result. 

\section{Perspectives}\label{sec:perspectives}

There is room for improving our understanding of the structure of minimal counter-examples (with or without additional assumptions such as being cubic or bipartite).
For instance, finding better upper bounds on their density could help in speeding up the computer searches.
Since most vertices in any minimal counter-example have degree three, we ask whether the number of edges is at most $\frac 3 2n + o(n)$.
We also leave open the related question of whether the number of vertices of degree at least four is at most $n/k$, for some absolute constant $k > 3$.
The result of Theorem~\ref{thm:Dplus} could be a good start to deal with this question.
On a different note, we also conjecture that any minimal counter-example (respectively any minimal cubic counter-example) is biconnected, and that any minimal bipartite counter-example is $2$-edge connected.
Finally, we observe that any counter-example is $K_4$-free and $K_{2,2}$-free, which implies bounded treewidth for certain classes of graphs~\cite{Dal21}.
The latter could help in proving the conjecture for new classes of graphs.

\bibliographystyle{amsplain}
\bibliography{biblio}

\end{document}